\renewcommand{\P}{{\mathbf P}}
\newcommand{\E}{{\mathbf E}}
\newcommand{\Z}{{\mathbf Z}}
\newcommand{\tmix}{t_{{\rm mix}}}
\newcommand{\ep}{\varepsilon}
\newcommand{\st}{\,:\,}
\newcommand{\X}{{\mathcal X}}
\newcommand{\one}{{\boldsymbol 1}}
\newcommand{\R}{{\mathbf R}}
\DeclareMathOperator{\var}{Var}
\newcommand{\deq}{\stackrel{def}{=}}
\newtheorem{thm}{Theorem}
\newtheorem{prop}[thm]{Proposition}
\newtheorem{lem}[thm]{Lemma}
\newtheorem*{thm*}{Theorem}
\newtheorem*{cor*}{Corollary}
\theoremstyle{definition}
\theoremstyle{remark}
\newtheorem{rmk}[thm]{Remark}
\newtheorem*{fact*}{Fact}
\newtheorem*{not*}{Notation}
\newtheorem*{claim*}{Claim}
\newenvironment{enumeratei}{\begin{enumerate}[\upshape (i)]}
                           {\end{enumerate}}
\title{Mixing of the exclusion process with
small bias}
 \author{David A. Levin}
 \address{Department of Mathematics, Fenton Hall, University of Oregon 1222,
 Eugene, OR, 97403-1222.}
 \author{Yuval Peres}
 \address{Microsoft Research, 1 Microsoft Way, Redmond WA 98052}
\begin{document}

\maketitle

\begin{abstract}
 We analyze the mixing behavior of the biased exclusion process  on a path of length $n$
 as the bias $\beta_n$ tends to $0$ as $n \to \infty$.    We show that the sequence
 of chains has a pre-cutoff, and interpolates between the unbiased exclusion and
 the process with constant bias. As the bias increases, the
 mixing time undergoes two phase transitions: one when $\beta_n$ is of order $1/n$, and
 the other when $\beta_n$ is order $\log n/n$.
 \end{abstract}

\section{Introduction}

Suppose $k$ particles are placed on vertices of the $n$-path,
with no site multiply occupied.   The \emph{biased exclusion process}
is the Markov chain $(X_t)_{t \geq 0}$ with transitions as follows:
\begin{itemize}
  \item choose uniformly among the $n-1$ edges of the path, 
  \item if both vertices of the selected edge are either occupied or
    unoccupied, do nothing,
  \item if there is exactly one particle on the edge, place it on the
    right vertex with probability $p = (1+\beta)/2$ and on the left
    with probability $q = (1-\beta)/2$.
 \end{itemize}
The canonical case is when $n$ is even and $k = n/2$.
This defines a reversible ergodic Markov chain, which
has a unique stationary distribution $\pi$.
It is natural to ask about its \emph{mixing time},
\[
\tmix(\ep) 
  =  \min\{ t \geq 0 \st 
   \max_{\sigma} \| \P_\sigma(X_t  \in \cdot) - \pi \|_{{\rm TV}} < \ep \} \,.
\]
We write $\tmix$ for $\tmix(1/4)$.
When $\beta = 0$, \ocite{W:MTS} proved
\[
\frac{1}{\pi^2}(1 + o(1)) n^3 \log n 
\leq \tmix(\ep) \leq \frac{2}{\pi^2}[1 + o(1)] n^3 \log (n/\ep) \,,
\]
and conjectured that the lower bound is sharp.
Recently, Lacoin \ycite{Lacoin} answered this, proving that
the process has a \emph{cutoff}, i.e.\ 
\[
\lim_{n \to \infty} \frac{\tmix(\ep)}{n^3 \log n} \to \frac{1}{\pi^2}\,.
\]
It is worth observing that the eigenfunction lower bound method introduced
in Wilson~\ycite{W:MTS} turns out to be widely applicable,
giving sharp lower bounds for many models.

When $\beta > 0$, the mixing time  was first studied by
\fullocite{BBHM}, who proved $\tmix = O(n^2)$. A simpler path coupling
proof was given by \fullocite{GPR}.   (This proof is repeated here as
the upper bound in Theorem \ref{thm:mixbe}.) 
The purpose of this paper is to understand the mixing behavior
when the bias may depend on $n$ and in particular when $\beta_n \to 0$
as $n \to \infty$. 
We show that in all cases, there is a \emph{pre-cutoff}, meaning that there are universal
constants $c_1 < c_2$ so that
\[
c_1 \leq \frac{\tmix(1-\ep)}{\tmix(\ep)} \leq c_2 \,.
\]
We find that, depending on the rate at which $\beta \to 0$, the mixing time
interpolates between the unbiased and constant bias cases.

Below summarizes our results.

We write $a_n \asymp b_n$ to mean that there exist constant $0 < c_1,c_2 < \infty$, not
depending on $\beta$, so that $c_1 \leq a_n/b_n \leq c_2$.
\begin{thm} \label{Thm:Main} 
  Consider the $\beta$-biased exclusion
  process on $\{1,2,\ldots,n\}$ with $k$ particles.  We assume that
  $k/n \to \rho \leq 1/2$.
  \begin{enumeratei}
  \item If $n\beta \leq 1$, then
  \begin{equation} \label{Eq:tmix1}
  	\tmix \asymp n^3 \log n \,.
  \end{equation}
  \item If $1 \leq n\beta \leq \log n$, then
  \begin{equation} \label{Eq:tmix2}
  	\tmix \asymp \frac{n \log n}{\beta^2} \,.
  \end{equation}
  \item If $n\beta > \log n$, then
  \begin{equation} \label{Eq:tmix3}
  	\tmix \asymp \frac{n^2}{\beta} \,.
  \end{equation}
  \end{enumeratei}
\end{thm}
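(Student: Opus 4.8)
We outline the proof. Encode a configuration $\eta\in\{0,1\}^{\{1,\dots,n\}}$ with $k$ particles by the lattice path $h\colon\{0,\dots,n\}\to\Z$ with $h(0)=0$ and $h(j)-h(j-1)=1-2\eta_j$, so that $h(n)=n-2k$ is fixed. Under this encoding the biased exclusion process becomes a reversible corner-flip dynamics on lattice paths that pushes the path \emph{upward} --- a local minimum is raised with probability $p=(1+\beta)/2$ and a local maximum lowered with probability $q=(1-\beta)/2$ --- and that admits a monotone grand coupling for the pointwise order $h\le h'$. Let $\hat h$ be the top path (particles packed to the right), $\check h$ the bottom path (particles packed to the left), and $\tau$ the coalescence time of the coupled chains started from $\hat h$ and $\check h$; then $\max_\sigma\|\P_\sigma(X_t\in\cdot)-\pi\|_{{\rm TV}}\le\P(\tau>t)$, so it suffices to bound $\tau$ for the upper bound. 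The three cases can be combined into the single statement
\[
\tmix\ \asymp\ \max\Bigl\{\,n\,\min(n,\beta^{-1})^{2}\log n\,,\ \ \min\!\big(n^{2}/\beta,\ n^{3}\big)\,\Bigr\},
\]
which we use as a guide: the first term is a \emph{relaxation} time and the second a \emph{transport} time, and the two phase transitions occur exactly where these cross, at $n\beta\asymp1$ and $n\beta\asymp\log n$. Here $\ell:=\min(n,\beta^{-1})$ is the width of the stationary front separating the nearly empty left region from the nearly full right region; the relaxation term is $n\ell^{2}$ times the usual (Wilson) factor $\log n$, the point being that once a bias is present the relevant diffusive length is $\ell$ rather than $n$.

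\emph{Upper bounds.} Run the monotone coupling and split the evolution into a transport phase and a relaxation phase. In the transport phase one shows that after $O(\min(n^{2}/\beta,\,n^{3}))$ steps the coupled paths agree outside an $O(\ell)$-window around the equilibrium front, with high probability: the bias displaces each particle at speed $\asymp\beta/n$ per step, so the $\Theta(n)$ units of mass reach their equilibrium location in time $\asymp n^{2}/\beta$, while for $\beta\le1/n$ diffusion alone does this in $\asymp n^{3}$. Quantitatively this rests on a (tilted) supermartingale estimate for a discrepancy functional such as $\mathrm{Area}(\hat h_t)-\mathrm{Area}(\check h_t)$ together with a maximal inequality, or on adapting the path coupling of \cite{GPR} recorded as the upper bound of Theorem~\ref{thm:mixbe}. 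In the relaxation phase, once the discrepancy is confined to an $O(\ell)$-window it disappears within $O(n\ell^{2}\log n)$ further steps: the discrepancy ``bumps'' perform essentially independent random walks at the scale $\ell$, each vanishing in $\asymp n\ell^{2}$ steps, and the $\log n$ comes from a union bound over the $O(n)$ relevant positions; for $\beta$ of order $1/n$ one invokes instead the unbiased upper bound $n^{3}\log n$ of \cite{W:MTS}, which is stable under adding a bias of that size. Adding the two phases yields $\tmix\lesssim$ (transport)$+$(relaxation), i.e.\ the displayed maximum, which is \eqref{Eq:tmix1}, \eqref{Eq:tmix2}, \eqref{Eq:tmix3} in the three ranges of $n\beta$.

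\emph{Lower bounds.} For the relaxation term use Wilson's eigenfunction method (\cite{W:MTS}). Take $\Phi(\eta)=\sum_{j}\varphi(j)(\eta_j-\rho)$ with $\varphi$ the (suitably tilted) ground-state eigenfunction of the symmetrized single-particle walk at scale $\ell$; then $\Phi$ satisfies an approximate eigenrelation with eigenvalue $\gamma\asymp1/(n\ell^{2})$, the bias contributing only the correction $E=\tfrac{\beta}{2(n-1)}\sum_i\big(\varphi(i+1)-\varphi(i)\big)\one\{\eta_i\ne\eta_{i+1}\}$, whose time-averaged expectation is --- by a crude a priori bound on local densities under the dynamics --- of smaller order than the equilibrium fluctuation scale of $\Phi$ and hence harmless. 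Started from the extremal state $\check X$ the quantities $\Phi(\check X)^{2}$, the one-step variance $\mathcal R:=\max_x\E_x[(\Phi(X_1)-\Phi(x))^{2}]$ and $\var_\pi\Phi$ are all polynomial in $n$, so Wilson's lemma gives $\tmix\gtrsim c\,\gamma^{-1}\log n\asymp n\ell^{2}\log n$, which is \eqref{Eq:tmix1} for $n\beta\le1$ and \eqref{Eq:tmix2} for $1\le n\beta\le\log n$. For the transport term, which dominates only when $n\beta>\log n$, use the observable $M(\eta)=\sum_j j\,\eta_j$: it equals a constant plus half the area under $h$, hence is monotone in the order, and a direct computation gives $\E[M(X_{t+1})-M(X_t)\mid X_t]\le(\beta k+1)/(n-1)$. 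Thus, started from $\check X$, $\E_{\check X}M(X_t)$ stays below $\tfrac12\E_\pi M$ for all $t\le c\,n^{2}/\beta$, while $\pi(M\le\tfrac12\E_\pi M)\to0$ because $\var_\pi M=o((\E_\pi M)^{2})$ in that range; this forces $\tmix\gtrsim n^{2}/\beta$, matching \eqref{Eq:tmix3}. Taking the larger of the two lower bounds in each range closes the gap.

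\emph{Main obstacle.} The crux is the upper bound in the intermediate regime (and the final part of the strong-bias regime): producing the power $\beta^{-2}$ together with a \emph{single} factor $\log n$. The soft estimate $\tmix\le\tfrac12\,t_{\mathrm{rel}}\log(1/\pi_{\min})$ is polynomially lossy here, since $\log(1/\pi_{\min})\asymp n^{2}\beta$; instead one must push the monotone coupling through the front region and control the discrepancy functional in the presence of ``reflection'' (the top path is free to rise exactly where the bottom path is blocked, and conversely), checking that all constants stay bounded as $k/n\to\rho$ ranges over $(0,\tfrac12]$. A secondary point, already visible above, is that for $\beta$ not too small the correction $E$ in the Wilson eigenrelation is pointwise comparable to $\gamma\Phi$, so one genuinely needs the \emph{time-averaged} expectation of $E$ rather than a pointwise bound --- which is where a rough description of the profile under the dynamics is used.
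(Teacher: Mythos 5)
The central gap is in your lower bounds for the new regimes. Your test function $\Phi(\eta)=\sum_j\varphi(j)(\eta_j-\rho)$ is linear in the occupation variables and satisfies only an \emph{approximate} eigenrelation: the asymmetry produces the correction $E=\tfrac{\beta}{2(n-1)}\sum_i(\varphi(i+1)-\varphi(i))\one\{\eta_i\ne\eta_{i+1}\}$, and, as you yourself note, once $n\beta\gtrsim 1$ this is pointwise of the same order as the main term $\gamma\Phi$. Wilson's lemma requires an exact eigenfunction (or an error that is provably negligible along the trajectory); your proposed fix --- that the \emph{time-averaged} expectation of $E$ is small ``by a crude a priori bound on local densities under the dynamics'' --- is exactly the hard step and is not carried out: controlling the number and location of particle--hole interfaces along the non-equilibrium trajectory started from an extremal state is itself a substantial estimate, not a soft one. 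The paper sidesteps this entirely by exhibiting an \emph{exact} second eigenfunction, $\Phi(h)=\sum_x\bigl(\alpha^{h(x)}-\alpha^{-x}a(\alpha)\bigr)\sin(\pi x/n)$ with $\alpha=\sqrt{p/q}$ (an exponential of the height function modulated by a sine), with eigenvalue $1-\frac{1-2\sqrt{pq}\cos(\pi/n)}{n-1}$, so that $\gamma\asymp(\beta^2+n^{-2})/n$ and Wilson's method applies with no error term (Propositions \ref{Prop:LB1} and \ref{Prop:LB2}). Without such an exact or quantitatively controlled eigenrelation, your lower bounds in \eqref{Eq:tmix1}--\eqref{Eq:tmix2} are not established.

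On the upper bounds: the two-phase monotone coupling (transport to an $O(\ell)$ window, then ``independent bumps'' annihilating in $n\ell^2\log n$ steps with a union bound) is heuristic --- neither the localization of the discrepancy to an $O(\ell)$ window nor the bump-annihilation time is proved, and the bumps are not independent walks. Moreover the difficulty you single out as the crux (getting $\beta^{-2}$ with a single $\log n$ in the intermediate regime) is already delivered by the exponential-metric path coupling you mention only as an alternative: with metric $\alpha^{n-k+h}$ one gets contraction $\delta/(n-1)$, $\delta=1-2\sqrt{pq}\ge\beta^2/2$, and diameter $\alpha(\alpha^k-1)(\alpha^{n-k}-1)/(\alpha-1)^2$, hence $\tmix(\ep)\le\frac{2n}{\beta^2}\bigl[\log(1/\ep)+n\beta+2\log(1/\beta)+O(1)\bigr]$, which is $O(n\beta^{-2}\log n)$ for $n\beta\le\log n$ and $O(n^2/\beta)$ for $n\beta>\log n$; this is the paper's Theorem \ref{thm:mixbe} and is the right tool for (ii)--(iii). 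For $n\beta\le1$, ``invoke Wilson's unbiased $n^3\log n$ bound, which is stable under adding a bias of that size'' is an assertion, not an argument; the paper proves this case by a labeled-particle coupling in the spirit of Aldous, dominating each discrepancy distance by a $\beta$-biased walk that hits $0$ within $O(n^3)$ steps with positive probability and then taking a union bound over the $k$ particles and $O(\log n)$ blocks. One part of your proposal that does stand as a legitimate alternative is the regime-(iii) lower bound via the monotone statistic $M(\eta)=\sum_j j\,\eta_j$ with drift at most $(\beta k+1)/(n-1)$ per step (the paper instead tracks the left-most particle); to finish it you need Markov's inequality for $M(X_t)$, not just its mean, together with the easy stationary concentration of $M$, but that is routine.
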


%

We provide more precise estimates on $\tmix(\ep)$ in Proposition
\ref{Prop:LB1}, Proposition \ref{Prop:LB2}, and Theorem \ref{thm:mixbe}.
In particular, the lower bound in \eqref{Eq:tmix1} follows from
Proposition \ref{Prop:LB1}, the lower bound in \eqref{Eq:tmix2} follows
from Proposition \ref{Prop:LB2}, and the lower bound in \eqref{Eq:tmix3}
follows from Proposition \ref{Prop:LBU}.  The upper bounds
in \eqref{Eq:tmix2} and \eqref{Eq:tmix3} follow from Theorem \ref{thm:mixbe},
and the upper bound in \eqref{Eq:tmix1} follows from Proposition \ref{Prop:NearUn}.

Since the behavior of the individual particles remains
diffusive in the $\beta n < 1$ regime, it is not surprising that
the mixing time has the same order as the unbiased process
in this case.  The change of the functional form of the mixing time 
at $\beta n = \log n$ is a more unexpected transition.

A path coupling gives useful upper bounds for $\beta \geq c/n$. 
When $\beta n$ is small, we use a simple
coupling adapted from a coupling for (unbiased) random adjacent
transpositions given in \ocite{A:RWG}.  In the unbiased case, $k$
coupled unbiased random walks must hit zero.  The bias introduced when
$\beta n$ is small doesn't overwhelm the diffusive motion, so the same
idea works. 

For lower bounds,  when $\beta n \leq \log n$, 
we use Wilson's method (introduced in \ocite{W:MTS}).
Thus we need the eigenfunction corresponding to the second eigenvalue,
which we explicitly compute.   When $\beta n > \log n$, we follow
the left-most particle, and show it needs at least order $n^2/\beta$ moves to mix.

The organization of the paper is as follows.  After giving definitions
in Section \ref{Sec:Defn}, in Section
\ref{Sec:SpectralLB} we  compute the eigenfunction needed for
Wilson's method, and provide the corresponding lower bounds.
In particular, the lower bounds in Theorem \ref{Thm:Main}
(i) and (ii) are given in Propositions \ref{Prop:LB1} and
\ref{Prop:LB2}, respectively.

We give the two upper bounds in Section \ref{Sec:UB}:
The upper bound in \eqref{Eq:tmix1} is given in
Proposition \ref{Prop:NearUn}, and the other upper bounds
in Theorem \ref{Thm:Main} are all immediate from
Theorem \ref{thm:mixbe}.

We conclude with the single particle lower bound needed
for Theorem \ref{Thm:Main} (iii) in Section \ref{Sec:LBU}.

\section{Definitions}
\label{Sec:Defn}
\subsection{Path description}

It will sometime be convenient to use a bijection of the state-space $\{0,1\}^n$
of the particle process to the space of nearest-neighbor paths of
length $n$ which begin at $0$ and have exactly $k$ \emph{up} increments
and $n-k$ \emph{down} increments.   For a particle configuration
$\sigma \in \{0,1\}^n$, let $h:\{0,1,\ldots,n\} \to \Z$ be defined by $h(0) = 0$, and
\[
h(j) - h(j-1) = (-1)^{1-\sigma(j)} \,,
\]
so occupied sites correspond to increments and vacant sites correspond
to decrements of the path.  See Figure \ref{Fig:ExclMov} for an illustration.

 \begin{figure}[h]
   \begin{center}
     \includegraphics[scale=0.20]{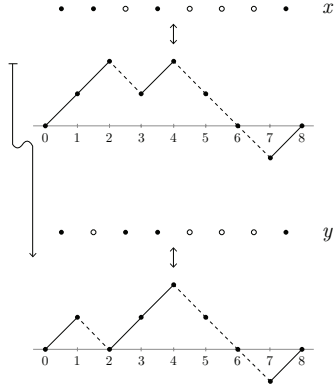}
     \caption{The correspondence between particle representation and
       path representation for neighboring configurations $x,y$.  Node
       $2$ of the path is updated in configuration $x$ to obtain $y$.
       This corresponds to exchanging the particle at vertex $2$ with
       the hole at vertex $3$. \label{Fig:ExclMov}}
   \end{center}
 \end{figure}

The dynamics on the path are as follows:  pick among the $n-1$ interval vertices
of the path.   If the path is a local extremum, refresh it with a local maximum
with probability $q$, and a local minimum with probability $p$.   If the
chosen vertex is not an extremum, do nothing.  See again Figure \ref{Fig:ExclMov} for 
an illustration of a transition, and Figure \ref{Fig:BiasedTrans} for the possible transitions from a particular path.

\begin{figure}[h]
   \begin{center}
     \includegraphics[scale=0.45]{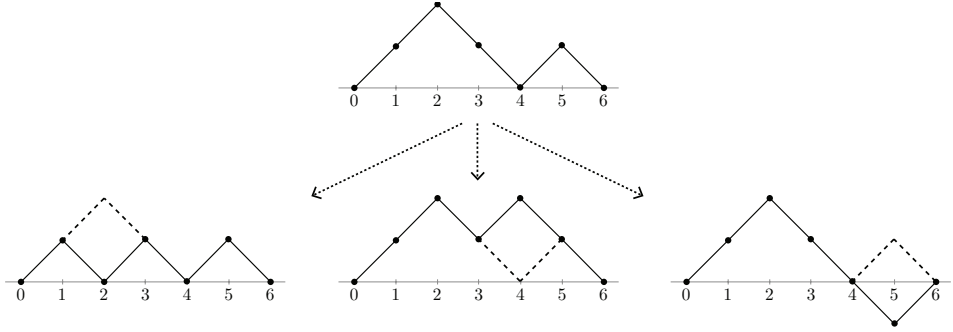}
     \caption{The possible transitions from a given configuration.
            \label{Fig:BiasedTrans}}
   \end{center}
\end{figure}

It will be convenient to move back and forth from the particle description
and the path description, and we will freely do so.

\section{Spectral Lower bounds}
\label{Sec:SpectralLB}

Here we set $\alpha = \sqrt{p/q}$; our assumption is always that 
$\alpha > 1$.  
\begin{prop} \label{Prop:EigenFunction}
  Let $a(\alpha) \deq (1 + \alpha^{2k-n})/(1+\alpha^{-n})$.
  The function $\Phi$, defined for the path $h$ as 
  \begin{equation} \label{Eq:ToBeEst}
    \Phi(h) \deq
      \sum_{x=1}^{n-1} \Bigl( \alpha^{h(x)} - \alpha^{-x}a(\alpha)
        \Bigr) \sin(\pi x/n)  \,,
  \end{equation}
  is the second eigenfunction for the biased exclusion process,
  with eigenvalue
  \[
  1 - \frac{1 - 2\sqrt{pq}\cos(\pi/n)}{n-1} \,.
  \]
\end{prop}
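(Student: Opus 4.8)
The plan is to verify directly that $P\Phi=\lambda\Phi$, where $P$ is the transition matrix and $\lambda=1-(1-2\sqrt{pq}\cos(\pi/n))/(n-1)$; since the chain is reversible, $\Phi$ is then automatically orthogonal to the constant ($\lambda=1$) eigenfunction, and that $\lambda$ is in fact the \emph{second} eigenvalue can be read off from the known spectral gap of the process (which matches that of a single biased particle on the path), so the substantive content is the eigenvalue identity. Write $g(x)\deq\alpha^{h(x)}$ on $\{0,1,\ldots,n\}$, so $g(0)=1$ and $g(n)=\alpha^{2k-n}$ because $h(n)=k-(n-k)$, and set $\Psi(h)\deq\sum_{x=1}^{n-1}g(x)\sin(\pi x/n)$, so that $\Phi(h)=\Psi(h)-C$ with $C\deq a(\alpha)\sum_{x=1}^{n-1}\alpha^{-x}\sin(\pi x/n)$ a constant independent of $h$. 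Since $PC=C$, it suffices to prove $(P\Psi)(h)=\lambda\Psi(h)+(1-\lambda)C$.

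First I would compute $(P\Psi)(h)$ from the path dynamics: updating an interior vertex $x$ moves $h(x)$ by $+2$, with probability $q/(n-1)$, exactly when $x$ is a local minimum, and by $-2$, with probability $p/(n-1)$, exactly when $x$ is a local maximum; every other choice does nothing. Using that such a move changes $\Psi$ by $(\alpha^{\pm2}-1)g(x)\sin(\pi x/n)$, together with $q(\alpha^2-1)=p-q=\beta$ and $p(\alpha^{-2}-1)=-\beta$, the contributions of $\Psi(h)$ recombine to $\Psi(h)$ and one is left with
\[
(P\Psi)(h)=\Psi(h)+\frac{\beta}{n-1}\bigl(S_{\min}-S_{\max}\bigr),
\]
where $S_{\min}$ and $S_{\max}$ are the sums of $g(x)\sin(\pi x/n)$ over the local minima, resp.\ maxima, of $h$.

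The next step evaluates $S_{\min}-S_{\max}$. The key observation is the pointwise identity $g(x-1)+g(x+1)=\bigl(\alpha+\alpha^{-1}+(\alpha-\alpha^{-1})\varepsilon(x)\bigr)g(x)$ for $1\le x\le n-1$, where $\varepsilon(x)$ is $+1$, $-1$, or $0$ according as $x$ is a local minimum, a local maximum, or neither — this records that the two neighbours of $h(x)$ are both $h(x)+1$, both $h(x)-1$, or one of each. Multiplying by $\sin(\pi x/n)$ and summing over $x$ expresses $\sum_{x=1}^{n-1}[g(x-1)+g(x+1)]\sin(\pi x/n)$ in terms of $\Psi(h)$ and $S_{\min}-S_{\max}$; on the other hand, reindexing this same sum and using $\sin(\pi(x-1)/n)+\sin(\pi(x+1)/n)=2\cos(\pi/n)\sin(\pi x/n)$ — the boundary contributions collapsing since $\sin 0=\sin\pi=0$ and $\sin(\pi(n-1)/n)=\sin(\pi/n)$ — evaluates it as $(g(0)+g(n))\sin(\pi/n)+2\cos(\pi/n)\Psi(h)$. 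Equating the two, solving for $S_{\min}-S_{\max}$, and using $\alpha-\alpha^{-1}=\beta/\sqrt{pq}$ and $\alpha+\alpha^{-1}=1/\sqrt{pq}$ yields
\[
(P\Psi)(h)=\lambda\Psi(h)+\frac{\sqrt{pq}\,\sin(\pi/n)}{n-1}\bigl(1+\alpha^{2k-n}\bigr),
\]
with $\lambda$ precisely as claimed. Comparing with $\lambda\Psi(h)+(1-\lambda)C$ now reduces everything to the scalar identity
\[
\sum_{x=1}^{n-1}\alpha^{-x}\sin(\pi x/n)=\frac{\sqrt{pq}\,\sin(\pi/n)\,(1+\alpha^{-n})}{1-2\sqrt{pq}\cos(\pi/n)}\,,
\]
which is the one genuinely computational point: it follows by writing $\sin(\pi x/n)$ as the imaginary part of $e^{i\pi x/n}$, summing the geometric series in $\alpha^{-1}e^{i\pi/n}$ (note that $(\alpha^{-1}e^{i\pi/n})^n=-\alpha^{-n}$), and simplifying, again via $\alpha+\alpha^{-1}=1/\sqrt{pq}$.

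The obstacle here is bookkeeping rather than ideas: keeping the boundary values $g(0)=1$, $g(n)=\alpha^{2k-n}$ and the index shifts straight through the summation-by-parts step, and checking that $a(\alpha)$ — which is designed precisely so that $(1-\lambda)C$ absorbs the residual boundary term $\sqrt{pq}\sin(\pi/n)(1+\alpha^{2k-n})/(n-1)$ — does so, which is exactly the content of the displayed scalar identity. Everything else is forced once the pointwise identity for $g(x-1)+g(x+1)$ is combined with the fact that $x\mapsto\sin(\pi x/n)$ diagonalizes the discrete second difference on $\{1,\ldots,n-1\}$ with Dirichlet boundary conditions.
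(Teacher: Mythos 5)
Your verification of the eigenvalue identity is correct: the one-step formula $(P\Psi)(h)=\Psi(h)+\tfrac{\beta}{n-1}(S_{\min}-S_{\max})$, the pointwise identity $g(x-1)+g(x+1)=(\alpha+\alpha^{-1}+(\alpha-\alpha^{-1})\varepsilon(x))g(x)$, the summation by parts against $\sin(\pi x/n)$ with boundary term $(1+\alpha^{2k-n})\sin(\pi/n)$, and the closing geometric-series identity for $\sum_{x=1}^{n-1}\alpha^{-x}\sin(\pi x/n)$ all check out, and together they do give $P\Phi=\bigl(1-\tfrac{1-2\sqrt{pq}\cos(\pi/n)}{n-1}\bigr)\Phi$. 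The route is close in spirit to the paper's but organized differently: the paper writes $\Phi(h)=\sum_x g_h(x)\phi(x)$ with $g_h(x)=\theta^{x-f_h(x)}-c(n,k,\theta)$ and $\phi(x)=\theta^{-x/2}\sin(\pi x/n)$ the eigenfunction of the absorbed single-particle walk, proves a one-step ``parabolic'' identity $\E_h[g_{\tilde h^{(x)}}(x)]=qg_h(x-1)+pg_h(x+1)$, and chooses the constant $c$ so that the boundary term $g_h(0)\phi(1)q+g_h(n)\phi(n-1)p$ vanishes; you instead work directly with $\Psi(h)=\sum_x\alpha^{h(x)}\sin(\pi x/n)$ and absorb the boundary term into the additive constant via the explicit scalar sum. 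Both are discrete separation-of-variables computations; yours avoids the auxiliary function $f_h$ at the cost of the explicit series evaluation.

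The genuine gap is in the claim that this eigenvalue is the \emph{second} one. Orthogonality to constants holds for any eigenfunction with eigenvalue $\neq 1$ and says nothing about ranking, and the assertion that the second eigenvalue ``can be read off from the known spectral gap of the process (which matches that of a single biased particle on the path)'' is not available here: the identification of the exclusion gap with the single-particle gap is exactly the kind of statement this proposition is meant to deliver (and in general such ``Aldous-type'' gap identifications are nontrivial theorems), so invoking it is circular unless you import an external result. The paper closes this point differently: each summand $\alpha^{h(x)}\sin(\pi x/n)$ is increasing in $h(x)$ and $\sin(\pi x/n)>0$, so $\Phi$ is a non-constant increasing function for the natural partial order on height functions; the chain is monotone, and a monotone non-constant eigenfunction of a monotone reversible chain necessarily corresponds to the second-largest eigenvalue (Wilson's lemma). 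You should either add that monotonicity argument, or weaken your conclusion to ``$\Phi$ is an eigenfunction with eigenvalue $1-\tfrac{1-2\sqrt{pq}\cos(\pi/n)}{n-1}$'' — which, as it happens, is all that Wilson's lower-bound method downstream actually requires, but is less than what the proposition states.
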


We let $\theta = q/p$; note our convention is $\theta < 1$.  For 
a path $h$ and vertex $0 \leq i \leq n$,
let 
\begin{equation*}
  f_h(i) = \sum_{1 \leq j \leq i} \one\{h(j) - h(j-1) = 1\}
\end{equation*}
be the number of up-edges before $i$.  We have
$f_h(0) = 0$ and $f_h(n) = k$.

Define $g^\star_h(i) = \theta^{i-f_h(i)}$ for $i=0,1,\ldots,n$.

\begin{lem}
  Let $\tilde{h}^{(i)}$ be the path obtained by
  applying an update to $h$ at internal vertex $i$.
  Then 
  \begin{equation} \label{eq:parabolic}
    \E_h[g^\star_{\tilde{h}^{(i)}}(i)] 
      = q g^\star_{h}(i-1))+ p g^\star_{h}(i+1) \,.
  \end{equation} 
\end{lem}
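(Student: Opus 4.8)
The plan is to first rewrite $g^\star$ in a form that makes the \emph{locality} of the dynamics transparent. Among the first $i$ edges of $h$ there are $f_h(i)$ up-steps and $i-f_h(i)$ down-steps, and their difference is $h(i)-h(0)=h(i)$; hence $f_h(i)=(i+h(i))/2$, so that $i-f_h(i)=(i-h(i))/2$ and
\[
g^\star_h(i)=\theta^{(i-h(i))/2}.
\]
In this form $g^\star_h(j)$ depends on $h$ only through the single value $h(j)$. An update at the internal vertex $i$ rewrites the two edges incident to $i$ but leaves $h(j)$ unchanged for every $j\neq i$; in particular $g^\star_{\tilde h^{(i)}}(i-1)=g^\star_h(i-1)$ and $g^\star_{\tilde h^{(i)}}(i+1)=g^\star_h(i+1)$, and the expectation in \eqref{eq:parabolic} involves only the (possibly random) new height $\tilde h^{(i)}(i)$. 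I would then split into two cases according to whether $i$ is a local extremum of $h$.

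If $i$ is \emph{not} a local extremum, the update does nothing, so $\tilde h^{(i)}=h$ and the left-hand side of \eqref{eq:parabolic} is simply $g^\star_h(i)$; it remains to verify the deterministic identity $g^\star_h(i)=q\,g^\star_h(i-1)+p\,g^\star_h(i+1)$. Since $i$ is not an extremum, the edges $(i-1,i)$ and $(i,i+1)$ point in the same direction, so $h(i-1),h(i),h(i+1)$ form an arithmetic progression with common difference $\pm1$. If both edges go up, then $(i-1)-h(i-1)=(i+1)-h(i+1)=i-h(i)$ and the identity collapses to $p+q=1$. If both edges go down, then $(i-1)-h(i-1)=(i-h(i))-2$ and $(i+1)-h(i+1)=(i-h(i))+2$, so the right-hand side equals $\theta^{(i-h(i))/2}\bigl(q\theta^{-1}+p\theta\bigr)$; here I use the definition $\theta=q/p$, which gives $q\theta^{-1}+p\theta=p+q=1$, and the identity follows.

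If $i$ \emph{is} a local extremum, then its two neighbours share a common height $h(i-1)=h(i+1)=:v$, and after the update $\tilde h^{(i)}(i)=v+1$ with probability $q$ (the vertex is refreshed to a local maximum) and $\tilde h^{(i)}(i)=v-1$ with probability $p$ (a local minimum). Hence the left-hand side of \eqref{eq:parabolic} equals $q\,\theta^{(i-v-1)/2}+p\,\theta^{(i-v+1)/2}$, which is exactly $q\,g^\star_h(i-1)+p\,g^\star_h(i+1)$ since $g^\star_h(i-1)=\theta^{(i-1-v)/2}$ and $g^\star_h(i+1)=\theta^{(i+1-v)/2}$. This exhausts the cases.

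I do not anticipate a genuine obstacle: the statement is a direct, mildly fussy, case check, and the only place where the specific value $\theta=q/p$ enters is the ``both edges down'' subcase above. The one thing to get right is the bookkeeping that an update is supported on the single vertex $i$, and the reparametrization $g^\star_h(i)=\theta^{(i-h(i))/2}$ makes that automatic.
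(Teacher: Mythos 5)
Your proof is correct and follows essentially the same route as the paper: the same three-way case check (local extremum, both edges up, both edges down), with the only place $\theta=q/p$ matters being the ``both edges down'' subcase. The reparametrization $g^\star_h(i)=\theta^{(i-h(i))/2}$ is just the identity $h(i)=2f_h(i)-i$ in disguise, so it is a cosmetic rather than substantive difference from the paper's bookkeeping with $f_h$.
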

\begin{proof}
  Consider the case where $i$ is a local extremum in $h$.
  If the path at $i$ is refreshed to a local maximum, then
  $f_{\tilde{h}^{(i)}}(i) = f_h(i-1) + 1$, while if the path
  is refreshed to a local minimum, then 
  $f_{\tilde{h}^{(i)}}(i) = f_h(i+1) - 1$.  Therefore,
  \begin{equation*}
    \E_h[ g^\star_{\tilde{h}^{(i)}}(i) ]
	 = q \theta^{i - (f_h(i-1) + 1)} + p \theta^{i - (f_h(i+1) -1)}
	 = q g^\star_h(i-1) + p g^\star_h(i+1) \,.
   \end{equation*}
   
   In the case where $h(i-1) < h(i) < h(i+1)$,
   the update at $i$ must leave the path unchanged.  In this
   case, $f_h(i-1) = f_h(i) - 1$ and
   $f_h(i+1) = f_h(i) + 1$.  Therefore,
   \begin{equation*}
     q g^\star_h(i-1) + p g^\star_h(i+1)
       = q \theta^{i - 1- (f_h(i) - 1)} + p
	\theta^{i+1 - (f_h(i) + 1)} 
       = g^\star_h(i) = \E_h[ g^\star_{\tilde{h}^{(i)}} ] \,.
   \end{equation*}
	
   Finally, suppose $h(i-1) > h(i) > h(i+1)$;
   again, the update at $i$ does not change the path.
   Since $f_h(i-1) = f_h(i) = f_h(i+1)$ in this case,
   \begin{equation*}
     q g^\star_h(i-1) + p g^\star_h(i+1)
       = q \theta^{(i-1) - f_h(i) } +
       p \theta^{(i+1) - f_h(i)}
       = (q \theta^{-1} + p\theta) g^\star_h(i) 
       = g^\star_h(i)\,.
   \end{equation*}
\end{proof}
For any constant $c$, the function $g_h(i) = g_h^\star(i) - c$ also satisfies
\begin{equation*}
  \E_h[ g_{\tilde{h}^{(i)}}(i) ] = q g_h(i-1) + p g_h(i+1) \,.
\end{equation*}
Define 
\begin{equation*}
  a(\theta) = \frac{1 + \theta^{n/2-k}}{1+\theta^{n/2}}
  = \frac{1 + \alpha^{2k-n}}{1 + \alpha^{-n}} \,,
\end{equation*}
and let
\begin{equation*}
  c(n,k,\theta) = \frac{1 + \theta^{n/2-k}}{1 + \theta^{-n/2}}
  = \theta^{n/2} \Bigl( \frac{1 + \theta^{n/2-k}}{1 + \theta^{n/2} }\Bigr) 
  = a(\theta) \theta^{n/2} \,.
\end{equation*}
Define
\[
g_h(i) = g_h^\star(i) - c(n,k,\theta) \,.
\]
\begin{proof}[Proof of Proposition \ref{Prop:EigenFunction}]
  Let $\phi:\{0,1,\ldots,n\} \to \R$ satisfy
  \begin{align*}
    \phi(0) & = 0, \quad \phi(n)  = 0 \\
    \lambda \phi(x) & = (p \phi(x-1) + q \phi(x+1))
                    & x = 1,\ldots,n-1 \,.
  \end{align*}
  That is, $\phi$ is the eigenfunction for the $q \uparrow, p
  \downarrow$ random  walk on $\{0,1,\ldots,n\}$ with absorbing states
  $0$ and $n$. A direct verification shows that 
  \begin{equation*} 
    \phi(x) = \theta^{-x/2} \sin(\pi x/n),
    \quad \lambda = 2\sqrt{pq} \cos(\pi/n)
  \end{equation*}
  is a solution.
  Note that
  \begin{align}
    \label{Eq:gphi}
    g_h(0) \phi(1) q + g_h(n) \phi(n-1) p 
    &  =  [1 - c] \theta^{-1/2} q \sin(\pi/n) \\
    & \quad + [\theta^{n-k} - c] \theta^{-n/2}\theta^{1/2}p 
      \sin(\pi - \pi/n) \nonumber \\
    & = \sqrt{pq} \sin(\pi/n)[1 + \theta^{n/2-k}
      - c[1 + \theta^{-n/2}]] \nonumber \\
    & = 0 \,. \nonumber
  \end{align}

  Define
  \begin{equation} \label{Eq:PhiDefn}
    \Phi(h) = 
    \sum_{x=1}^{n-1} g_h(x) \phi(x) \,.
  \end{equation}

  Let $\tilde{h}$ be the configuration obtained after one step of the chain
  when started from $h$; as before
  let $\tilde{h}^{(x)}$ be the update given that
  internal vertex $x$ is selected for an update.
  \begin{align*}
    \E_h[ \Phi(\tilde{h}) ]
    & = \sum_{x=1}^{n-1} \E_h[ g_{\tilde{h}}(x) ] \phi(x)\\
    & = \sum_{x=1}^{n-1} \Bigr[ \Bigl(1 - \frac{1}{n-1}\Bigr)
      g_h(x) + \frac{1}{n-1}\E_h[ g_{\tilde{h}^{(x)}} ]
      \Bigr] \phi(x) \\
    & = \Bigl( 1 - \frac{1}{n-1} \Bigr)  \Phi(h) + 
      \frac{1}{n-1}\sum_{x=1}^{n-1} [q g_h(x-1) + p g_h(x+1)] \phi(x) 
  \end{align*}
  The sum on the right equals
  \begin{multline*}
    \sum_{x=1}^{n-1} g_h(x)[q \phi(x+1) + p \phi(x-1) ] 
    + [ g_h(0) \phi(1) q + g_h(n) \phi(n-1) p ] \\
    = \lambda \sum_{x=1}^{n-1} g_h(x) \phi(x) =
    \lambda \Phi(h) \,,
  \end{multline*}
  by \eqref{Eq:gphi}.  Therefore,
  \[
  \E_h[ \Phi(\tilde{h}) ] = \Bigl( 1 - \frac{1 - \lambda}{n-1} \Bigr) \Phi(h)
  \]
  Note that $\phi(x) > 0$ for $x=1,\ldots,n-1$, and $g_h$ is increasing in $h$,
  so $\Phi$ is increasing.  An increasing eigenfunction always corresponds to
  the second eigenvalue, so it must be the one with largest (non unity)
  eigenvalue.  The second largest eigenvalue equals
  \[
  1 - \frac{1 - 2\sqrt{pq}\cos(\pi/n)}{n-1} \,.
  \]
  Note that $h(x) = 2f_h(x) - x$, so we have
  \begin{align*}
    \Phi(h) & = \sum_{x=1}^{n-1} g_h(x) \phi(x) \\
            & = 
              \sum_{x=1}^{n-1} \Bigl[ \theta^{x-f_h(x)} - c(n,k,\theta) \Bigr]
              \theta^{-x/2} \sin(\pi x/n) \\
            & = \sum_{x=1}^{n-1} \Bigl[ \alpha^{h(x)} - \theta^{(n-x)/2}
              \frac{1 + \theta^{n/2-k}}{1 + \theta^{n/2}} \Bigr] \sin(\pi x/n) \\
            & = \sum_{x=1}^{n-1} \alpha^{h(x)} \sin(\pi x/n) -
              \xi(n,k,\alpha)\,.
  \end{align*}
  Let
  \[
  \Psi(h) \deq \sum_{x=1}^{n-1} \alpha^{h(x)} \sin(\pi x/n)\,.
  \]
  Since $\xi(n,k,\alpha)$ does not depend on $h$, and
  the eigenfunction $\Phi$ must be orthogonal to the constants,
  it follows that $\xi(n,k,\alpha) = E_\pi(\Psi)$.
  Since $\sin(\pi(n-x)/n) = \sin(\pi x/n)$,
  \begin{equation*}
    E_\pi \Psi   = a(\theta)  \sum_{x=1}^{n-1} \theta^{(n-x)/2}
    \sin(\pi x/n) 
    = a(\theta) \sum_{x=1}^{n-1} \alpha^{-x} \sin(\pi x/n) \,.
  \end{equation*}
\end{proof}

To apply Wilson's Lower Bound, we need to bound $\max_h \Phi(h)$ from
below, and $R := | (\Phi(\tilde{h}) - \Phi(h))|^2$ from above.
Define
\begin{equation} \label{Eq:h0def}
  h_0(x) = 
  \begin{cases}
    x & x \leq k \\
    2k - x & k < x \leq n \,.
  \end{cases}
\end{equation}

\begin{lem}
  For $h_0$ defined in \eqref{Eq:h0def},
  \begin{equation} \label{Eq:Phih0}
    \begin{split}
      \Phi(h_0) & =
      \sum_{x=1}^k \alpha^{x}  (1 - \alpha^{-2x}) a(\alpha) \sin(\pi x/n) \\
      & \quad + \sum_{x=k+1}^{n/2} \alpha^x 
      \Bigr( \frac{(\alpha^{2k}-1)(\alpha^{-2x}+\alpha^{-n})}{1+\alpha^{-n}}
      \Bigr)
      \sin(\pi x/n)  \,.
    \end{split}
  \end{equation}
\end{lem}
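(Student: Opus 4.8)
The statement is an explicit identity, so the plan is a direct computation: substitute the path $h_0$ into the closed form of $\Phi$ supplied by Proposition~\ref{Prop:EigenFunction} and simplify. I would begin by recording the two elementary consequences of the definition $a(\alpha) = (1+\alpha^{2k-n})/(1+\alpha^{-n})$ that do all of the algebraic work:
\[
1 + \alpha^{2k-n} = (1+\alpha^{-n})\,a(\alpha), \qquad \frac{\alpha^{2k}-1}{1+\alpha^{-n}} = \alpha^{2k} - a(\alpha).
\]
I will use $\Phi(h) = \sum_{x=1}^{n-1}\bigl(\alpha^{h(x)} - a(\alpha)\alpha^{-x}\bigr)\sin(\pi x/n)$ from \eqref{Eq:ToBeEst}.

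Next I would plug in $h_0$. Since $h_0(x) = x$ for $x\le k$ and $h_0(x) = 2k-x$ for $x>k$, split the sum at $x=k$: on the first block $\alpha^{h_0(x)}=\alpha^{x}$, and on the second block $\alpha^{h_0(x)}=\alpha^{2k-x}$. In the second block substitute $x\mapsto n-x$ and use $\sin(\pi x/n)=\sin(\pi(n-x)/n)$ to rewrite $\sum_{x=k+1}^{n-1}\alpha^{2k-x}\sin(\pi x/n)$ as $\alpha^{2k-n}\sum_{x=1}^{n-k-1}\alpha^{x}\sin(\pi x/n)$. Likewise fold the constant part $a(\alpha)\sum_{x=1}^{n-1}\alpha^{-x}\sin(\pi x/n)$ by pairing $x$ with $n-x$, which replaces $\alpha^{-x}$ by the symmetric combination $\alpha^{-x}+\alpha^{x-n}$. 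After this step everything is a sum over $1\le x\le n/2$, and I would split it once more at $x=k$ (the block $[k+1,n-k-1]$ produced by the descending part of $h_0$ is itself symmetric about $n/2$, hence folds into $[k+1,n/2]$).

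Then one collects the coefficient of $\sin(\pi x/n)$ range by range. For $1\le x\le k$ the coefficient is $(1+\alpha^{2k-n})\alpha^{x} - a(\alpha)(\alpha^{-x}+\alpha^{x-n})$, and the first identity above collapses this to $a(\alpha)(\alpha^{x}-\alpha^{-x}) = a(\alpha)\,\alpha^{x}(1-\alpha^{-2x})$, which is the first sum in \eqref{Eq:Phih0}. For $k<x\le n/2$ the coefficient is $\alpha^{2k-x}+\alpha^{2k+x-n} - a(\alpha)(\alpha^{-x}+\alpha^{x-n}) = (\alpha^{2k}-a(\alpha))(\alpha^{-x}+\alpha^{x-n})$, and the second identity turns this into $\dfrac{(\alpha^{2k}-1)(\alpha^{-x}+\alpha^{x-n})}{1+\alpha^{-n}} = \alpha^{x}\dfrac{(\alpha^{2k}-1)(\alpha^{-2x}+\alpha^{-n})}{1+\alpha^{-n}}$, which is the second sum in \eqref{Eq:Phih0}.

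There is no conceptual obstacle; this is entirely a matter of careful reindexing. The one place that genuinely needs attention — and where sign or factor errors are easiest to make — is the bookkeeping of the endpoint terms in the various foldings: the term at $x=k$, where the two defining cases of $h_0$ meet, and, when $n$ is even, the self-paired term $x=n/2$ of the symmetry $x\leftrightarrow n-x$. These are handled by the same manipulations and account for the boundary summands of the two displayed sums.
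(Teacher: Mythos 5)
Your computation is correct and is essentially the paper's own proof: the paper likewise folds the sum in \eqref{Eq:ToBeEst} by pairing $x$ with $n-x$ via $\sin(\pi x/n)=\sin(\pi(n-x)/n)$, splits at $x=k$, and simplifies each range using the definition of $a(\alpha)$, arriving at exactly your two coefficient identities $a(\alpha)\alpha^{x}(1-\alpha^{-2x})$ and $(\alpha^{2k}-a(\alpha))(\alpha^{-x}+\alpha^{x-n})$. Your attention to the endpoint bookkeeping at $x=k$ and $x=n/2$ is well placed, since the paper's intermediate display is loose there (both of its sums include the $x=k$ term), though this does not affect how the lemma is used later.
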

\begin{proof}
  Using that $\sin(\pi x/n) = \sin(\pi (n-x)/n)$, 
  we pair together the terms at $x$ and $n-x$ in \eqref{Eq:ToBeEst} so that
  \begin{align*}
    \Phi(h_0) & = \sum_{x=1}^{k} \left( \alpha^x + \alpha^{2k-n + x} -
                a(\alpha)(\alpha^{-x} + \alpha^{x-n}) \right) \sin(x\pi/n)\\
              & \quad
                + \sum_{x=k}^{n/2} \left( \alpha^{2k-x} + \alpha^{2k-n + x} -
                a(\alpha)(\alpha^{-x} + \alpha^{x-n}) \right) \sin(x\pi/n) \,.
  \end{align*}
  The first sum simplifies to
  \[
  \sum_{x=1}^k \alpha^{x}  (1 - \alpha^{-2x}) 
  \Bigl( \frac{1+\alpha^{2k-n}}{1+\alpha^{-n}} \Bigr) \sin(\pi x/n)\,,
  \]
  and the second to
  \[
  \sum_{x=k+1}^{n/2} \alpha^x 
  \Bigr( \frac{(\alpha^{2k}-1)(\alpha^{-2x}+\alpha^{-n})}{1+\alpha^{-n}}
  \Bigr)
  \sin(\pi x/n) \,.
  \]
\end{proof}

\begin{lem} \label{Lem:EFRB}
  Let $h_0$ be as in \eqref{Eq:h0def}, and for a path $h$, let
  $\tilde{h}$ be one step of the exclusion chain started from $h$.
  Let $\gamma = 1 - \lambda$ be the spectral gap.
  Define
  \[
  R \deq \max_h |\Phi(\tilde{h}) - \Phi(h)|^2 \,.
  \]
  If $0 < n\beta \leq \log n$, then
  \[
  \log\Bigl( \frac{\gamma \Phi(h_0)^2}{2R} \Bigr) \geq [1 + o(1)] \log n
  \,.
  \]
\end{lem}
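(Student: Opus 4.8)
The plan is to assemble the three quantitative inputs of Wilson's method — a lower bound on $\gamma$, a lower bound on $\Phi(h_0)$, and an upper bound on $R$ — while tracking the power of $\beta$ carried by each, since the asserted inequality survives only because those powers cancel in $\gamma\,\Phi(h_0)^2/R$. For the gap, write $2\sqrt{pq}=\sqrt{1-\beta^2}$, so that $\gamma = 1-\sqrt{1-\beta^2}\cos(\pi/n)$; bounding $\sqrt{1-\beta^2}\le 1-\beta^2/2$ and $\sqrt{1-\beta^2}\le 1$ separately gives $\gamma\ge\beta^2/2$ and $\gamma\ge 1-\cos(\pi/n)\ge 2/n^2$, hence $\log\gamma\ge 2\log\beta-O(1)$ and $\log\gamma\ge-2\log n-O(1)$.

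For $R$: one step refreshes the path at a single internal vertex $x$ that is a local extremum, changing $h(x)$ by $\pm 2$, and since in \eqref{Eq:ToBeEst} only the $x$-th summand moves, $|\Phi(\tilde h)-\Phi(h)| = |\alpha^{\tilde h(x)}-\alpha^{h(x)}|\sin(\pi x/n)=\alpha^{\min(h(x),\tilde h(x))}(\alpha^2-1)\sin(\pi x/n)$. Because $h(x)=2f_h(x)-x\le 2\min(x,k)-x\le k$ (and likewise for $\tilde h$) and $\sin\le 1$, this is at most $(\alpha^2-1)\alpha^{k}$, so $R\le(\alpha^2-1)^2\alpha^{2k}$. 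Using $\alpha^2-1=2\beta/(1-\beta)$, $\log\alpha=\beta(1+o(1))$ and $k\le(1/2+o(1))n$, this yields $\log R\le 2k\log\alpha+2\log(\alpha^2-1)+O(1)\le n\beta(1+o(1))+2\log\beta+O(1)$.

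For $\Phi(h_0)$ I would discard the second sum in \eqref{Eq:Phih0} (both sums are nonnegative: $\alpha>1$, $a(\alpha)=(1+\alpha^{2k-n})/(1+\alpha^{-n})\ge 1$, and $\sin(\pi x/n)>0$ for $1\le x\le n-1$), so $\Phi(h_0)\ge\sum_{x=1}^k(1-\alpha^{-2x})\sin(\pi x/n)$. Keeping only $k/2\le x\le k$ and using $1-\alpha^{-2x}\ge 1-\alpha^{-k}\ge\tfrac12\min\{k\log\alpha,1\}\ge\tfrac12\min\{k\beta,1\}$ (valid since $\log\alpha\ge\beta$) together with $\sin(\pi x/n)\ge\sin(\pi k/(2n))\ge k/n$ (valid since $k\le n/2$), one obtains $\Phi(h_0)\ge\tfrac{k^2}{4n}\min\{k\beta,1\}$. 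Since $k/n\to\rho>0$ this is at least $c_\rho\,n\min\{n\beta,1\}$, so $\log\Phi(h_0)\ge\log n+\min\{\log(n\beta),0\}-O(1)$.

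Finally, combine $\log\!\big(\gamma\Phi(h_0)^2/(2R)\big)=\log\gamma+2\log\Phi(h_0)-\log(2R)$ by cases. If $n\beta\le 1$: use $\log\gamma\ge-2\log n-O(1)$, $\log(2R)\le 2\log\beta+O(1)$ (the $n\beta$ term is $O(1)$), and $2\log\Phi(h_0)\ge 2\log n+2\log(n\beta)-O(1)$; these add to $2\log(n\beta)-2\log\beta-O(1)=2\log n-O(1)$. If $1<n\beta\le\log n$: use $\log\gamma\ge 2\log\beta-O(1)$, $2\log\Phi(h_0)\ge 2\log n-O(1)$, and $\log(2R)\le(1+o(1))\log n+2\log\beta+O(1)$ (using $n\beta\le\log n$); these add to $2\log n-(1+o(1))\log n-O(1)=(1-o(1))\log n$. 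Either way the bound is $[1+o(1)]\log n$. The only delicate point is this $\beta$-bookkeeping — $\Phi(h_0)$ and $\sqrt R$ are each of order $\beta$ (times a power of $n$) when $n\beta$ is small, while $\gamma$ is of order $\beta^2$ when $n\beta$ is large, so discarding the exact power of $\beta$ anywhere breaks the estimate — together with the use of $\rho>0$, without which the ratio is only of order $k^2$ and the sharp constant $1$ (though not the order $\log n$, which is all Theorem~\ref{Thm:Main} needs) is lost.
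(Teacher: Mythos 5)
Your bound on $R$ and your treatment of the two gap regimes are fine in spirit, but two linked problems break the case $1<n\beta\le\log n$. First, the $\gamma$ in the lemma is the spectral gap of the exclusion chain, $\gamma=\frac{1-2\sqrt{pq}\cos(\pi/n)}{n-1}$ (this is what \eqref{Eq:SG} records and what is fed into Wilson's bound \eqref{Eq:WilsonLB} in Propositions \ref{Prop:LB1} and \ref{Prop:LB2}); you dropped the $1/(n-1)$, so your $\log\gamma$ is too large by $\log n$ --- exactly the size of the quantity being estimated. Second, and this is the real loss, your lower bound on $\Phi(h_0)$ discards the exponential weights by using $\alpha^x\ge 1$, giving only $\Phi(h_0)\gtrsim n\min\{n\beta,1\}$, while your upper bound on $R$ necessarily carries the factor $\alpha^{2k}=e^{(2\rho+o(1))n\beta}$. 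Hence your $\Phi(h_0)^2/R$ has an uncancelled loss of $\alpha^{2k}$, which for $n\beta$ of order $\log n$ is polynomial in $n$ (about $n^{2\rho}$, i.e.\ about $n$ in the canonical case $k=n/2$). With the correct $\gamma$, your three estimates add up in the regime $1<n\beta\le\log n$ to only $\log n-2k\beta(1+o(1))-O(1)\ \ge\ (1-2\rho-o(1))\log n$: this is vacuous when $\rho=1/2$, and in any case misses the constant $1$ asserted by the lemma and needed for the sharp bounds in Propositions \ref{Prop:LB1} and \ref{Prop:LB2}. Your final accounting only appears to close because the factor $n$ gained by overestimating $\gamma$ happens to compensate the factor up to $\alpha^{2k}$ lost in $\Phi(h_0)^2$.

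The repair is to keep $\alpha^x$ in the lower bound for $\Phi(h_0)$, as the paper does: restricting \eqref{Eq:Phih0} to $k/2\le x\le k$ and summing the geometric series gives $\Phi(h_0)\ \ge\ \frac{\sin(\pi k/2n)}{2}\,\alpha^k\,\frac{(\alpha-\alpha^{-k/2})(1-\alpha^{-3k/2})}{\alpha-1}$, so the $\alpha^{2k}$ cancels against $R\lesssim \alpha^{2k}(\alpha-1)^2$ and one gets $\Phi(h_0)^2/(2R)\gtrsim \bigl[(\alpha-\alpha^{-k/2})(1-\alpha^{-3k/2})/(\alpha-1)^2\bigr]^2$. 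For $1/n\le\beta\le\log n/n$ this is $\gtrsim \beta^{-4}$, and with $\gamma\gtrsim\beta^2/n$ it yields $\gamma\Phi(h_0)^2/(2R)\gtrsim n/\log^4 n$, i.e.\ $[1+o(1)]\log n$; in the regime $n\beta\to\zeta\in[0,1]$ your own estimate, once $\gamma$ is corrected, still gives $\log n-O(1)$, which suffices there.
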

\begin{proof}
  Fix $b < k$.  From \eqref{Eq:Phih0},
  \begin{align}
    \Phi(h_0) & \geq 
                \frac{\sin(\pi b/n)}{2} \sum_{x=b}^k \alpha^x(1 - \alpha^{-2x})
                \nonumber \\
              & = \frac{\sin(\pi b/n)}{2} \alpha^k \frac{(\alpha - \alpha^{-(k-b)})(1 -
                \alpha^{-(b+k)})}{\alpha - 1} \label{Eq:Phih0B}\,.
  \end{align}
  If $\tilde{h}$ is obtained by a single update to $h$ at $x$,
  the $|\tilde{h}(x) - h(x)| \leq 2$, and
  \[
  |\alpha^{h(x)} - \alpha^{\tilde{h}(x)}|
  \leq 2\alpha^{k} \log(\alpha) \,.
  \]
  Thus, if $R = \max_h |\Phi(\tilde{h}) - \Phi(h)|^2$, then
  \begin{equation} \label{Eq:R0}
    \sqrt{R} \leq 2\alpha^k (\alpha - 1) \,.
  \end{equation}
  Letting $b = k/2$ so that $b/n \to \rho/2$, 
  equations \eqref{Eq:Phih0B} and \eqref{Eq:R0} show that
  \begin{equation} \label{Eq:Phih0C}
    \frac{\Phi(h_0)^2}{2R}
    \geq c_0 \left[\frac{(\alpha -
        \alpha^{-k/2})(1-\alpha^{-3k/2})}{
        (\alpha - 1)^2} \right]^2 \,.
  \end{equation}
  The spectral gap $1-\lambda = \gamma$ satisfies
  \begin{align}
    \gamma & = \frac{1 - 2\sqrt{pq}\cos(\pi/n)}{n-1} \nonumber \\
           & =  \frac{\beta^2/2 + O(\beta^4) + \frac{\pi^2}{2n^2} +
             O(n^{-4})}{n-1} \label{Eq:SG} \,.
  \end{align}
  Suppose that $n^{-1} \leq \beta \leq \frac{\log n}{n}$.  Then
  from \eqref{Eq:Phih0C} and \eqref{Eq:SG} we have
  \[
  \log\Bigl(\frac{\gamma\Phi(h_0)^2}{2R}\Bigr) \geq 
  \log \Bigl( c_1 \frac{n}{\log^4 n} \Bigr)
  = [1 + o(1)]\log n \,.
  \]
  
  If $n\beta \to \zeta$, where $0 \leq \zeta \leq 1$, then 
  \[
  \liminf_{n \to \infty} \frac{\gamma \Phi(h_0)^2}{n 2R}
  \geq 
  \begin{cases}
    c_0 \left[\frac{(1 - e^{-\zeta \rho/2})(1 - e^{-3\zeta
          \rho/2})}{\zeta ^2}\right]^2 & \zeta > 0 \\
    c_0 \left( \frac{3\rho^2}{4} \right)^2 & \zeta = 0
  \end{cases}
  \,.
  \]
  The right-hand side is bounded below for $0 \leq \zeta \leq 1$, so we
  conclude that
  \[
  \log\Bigl( \frac{\gamma \Phi(h_0)^2}{2R} \Bigr) \geq [1 + o(1)] \log n \,.
  \]
\end{proof}

\begin{prop} \label{Prop:LB1}
  If $n \beta \to \zeta$ where $0 \leq \zeta$, then
  \begin{equation} \label{Eq:nb0}
    \tmix(\ep) \geq \frac{n^3}{\pi^2+\zeta^2}[1 + o(1)]
    \Bigl(\log n + \log[(1-\ep)/\ep)] \Bigr) \,.
  \end{equation}
\end{prop}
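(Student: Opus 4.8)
The plan is to apply Wilson's eigenvalue lower bound (from \ocite{W:MTS}) with the second eigenfunction $\Phi$ of Proposition \ref{Prop:EigenFunction} and the path $h_0$ of \eqref{Eq:h0def} as the starting configuration. Write $\gamma = (1 - 2\sqrt{pq}\cos(\pi/n))/(n-1)$ for the spectral gap and $\mu = 1-\gamma$ for the eigenvalue of $\Phi$ on the exclusion chain. Since the chain is reversible and $\mu$ is close to $1$, Wilson's method applies for all large $n$ and gives, for every $\ep\in(0,1)$,
\[
\tmix(\ep) \;\ge\; \frac{1}{2\log(1/\mu)}\left[\,\log\!\left(\frac{\gamma\,\Phi(h_0)^2}{2R}\right) + \log\!\left(\frac{1-\ep}{\ep}\right)\right],
\]
where $R = \max_h|\Phi(\tilde h)-\Phi(h)|^2$ as in Lemma \ref{Lem:EFRB}. (The precise $\ep$-dependent additive constant produced by the second-moment estimate is irrelevant, since any term bounded in $n$ will be absorbed below.) In fact $h_0$ is the maximizer of $\Phi$: the bound $f_h(x)\le\min(x,k)$ gives $h(x) = 2f_h(x)-x \le \min(x,2k-x) = h_0(x)$ for every admissible path, and $\Phi$ is increasing in $h$ by the proof of Proposition \ref{Prop:EigenFunction}, so nothing is lost by starting at $h_0$.

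The first task is the prefactor. Using $2\sqrt{pq} = \sqrt{1-\beta^2}$ together with the Taylor expansions of $\sqrt{1-\beta^2}$ and $\cos(\pi/n)$ yields \eqref{Eq:SG}, i.e.\ $\gamma = (n-1)^{-1}\bigl(\tfrac{\beta^2}{2} + \tfrac{\pi^2}{2n^2} + O(\beta^4 + n^{-4})\bigr)$. Under the hypothesis $n\beta\to\zeta$ (with $\zeta<\infty$) we have $n^2\beta^2\to\zeta^2$, hence $\gamma = \frac{\pi^2+\zeta^2}{2n^3}\,[1+o(1)]$, and since $\gamma\to 0$ also $\log(1/\mu) = -\log(1-\gamma) = \gamma\,[1+o(1)]$. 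Therefore
\[
\frac{1}{2\log(1/\mu)} \;=\; \frac{n^3}{\pi^2+\zeta^2}\,[1+o(1)].
\]

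The second task is already done: since $\zeta<\infty$ we eventually have $n\beta\le\log n$, and we always assume $\alpha>1$ (so $\beta>0$ and $\Phi$ is nondegenerate), so Lemma \ref{Lem:EFRB} applies (the case $\zeta=0$ being its $\zeta=0$ branch) and gives $\log\bigl(\gamma\Phi(h_0)^2/(2R)\bigr) \ge [1+o(1)]\log n$. Substituting both estimates into Wilson's bound,
\[
\tmix(\ep) \;\ge\; \frac{n^3}{\pi^2+\zeta^2}[1+o(1)]\left([1+o(1)]\log n + \log\frac{1-\ep}{\ep}\right),
\]
and since $\log\frac{1-\ep}{\ep}$ is a constant for fixed $\ep$, while $\log n + \log\frac{1-\ep}{\ep}\to\infty$, the bracket equals $[1+o(1)]\bigl(\log n + \log\frac{1-\ep}{\ep}\bigr)$, which is \eqref{Eq:nb0}.

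There is no genuine obstacle: the proposition is simply the assembly of Wilson's lemma, the eigenfunction of Proposition \ref{Prop:EigenFunction}, the estimate of Lemma \ref{Lem:EFRB}, and the expansion \eqref{Eq:SG}. The only points needing care are (i) observing that the eigenvalue tends to $1$, so that $\log(1/\mu)\sim\gamma$ and the constant $1/(\pi^2+\zeta^2)$ emerges exactly rather than off by a bounded factor, and (ii) the routine bookkeeping of the $o(1)$ error terms, in particular that the $\ep$-dependent additive constant may be folded into the factor $[1+o(1)]$ multiplying $\log n$.
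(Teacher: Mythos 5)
Your proof is correct and follows essentially the same route as the paper: Wilson's eigenvalue lower bound applied with the eigenfunction $\Phi$ of Proposition \ref{Prop:EigenFunction} evaluated at $h_0$, the gap expansion \eqref{Eq:SG} giving the prefactor $n^3/(\pi^2+\zeta^2)$, and Lemma \ref{Lem:EFRB} supplying $\log\bigl(\gamma\Phi(h_0)^2/(2R)\bigr)\ge[1+o(1)]\log n$. The extra observations (that $h_0$ maximizes $\Phi$, and the explicit $\log(1/\mu)\sim\gamma$ step) are correct but not needed beyond what the paper's own assembly already uses.
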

\begin{proof}
  From \eqref{Eq:SG},
  the spectral gap $1-\lambda = \gamma$ satisfies
  \[
  \gamma = \frac{\pi^2 + \zeta^2}{2 n^3}[1 + o(1)] \,.
  \]
  Using Lemma \ref{Lem:EFRB} in  \ocite{W:MTS} (see also Theorem 13.5 of \fullocite{LPW} for a discussion) yields
  \begin{align}
    \tmix(\ep) & \geq \frac{1}{2\log(1/\lambda)}\left[
                 \log\left( \frac{(1-\lambda)\Phi(x)^2}{2R}\right) + \log((1-\ep)/\ep)
                 \right] 
                 \label{Eq:WilsonLB} \\
               & = \frac{n^3}{( \pi^2 + \zeta^2)}[1 + o(1)]\Bigl(\log n  +
                 \log[(1-\ep)/\ep] \Bigr)\,, \nonumber
  \end{align}
  which yields \eqref{Eq:nb0}.  Note that this matches the lower bound
  in Theorem 4 of Wilson (2004) for the symmetric exclusion when $\lim_n \beta n = 0$.
\end{proof}

\begin{prop} \label{Prop:LB2}
If $n\beta \to \infty$ but $n\beta \leq \log n$, then
\[
\tmix(\ep) \geq \frac{n}{\beta^2}[1 + o(1)](\log n + \log[(1-\ep)/\ep]) \,.
\]
\end{prop}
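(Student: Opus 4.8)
The plan is to apply Wilson's spectral lower bound \eqref{Eq:WilsonLB} with the test configuration $h_0$ of \eqref{Eq:h0def}, exactly as in the proof of Proposition \ref{Prop:LB1}, but now reading off the spectral gap in the regime $n\beta \to \infty$, $n\beta \leq \log n$. The only substantive step is to re-evaluate the gap in this window.

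First I would pin down the asymptotics of $\gamma = 1 - \lambda$. From \eqref{Eq:SG},
\[
\gamma = \frac{\beta^2/2 + O(\beta^4) + \pi^2/(2n^2) + O(n^{-4})}{n-1} \,.
\]
Since $n\beta \to \infty$ we have $\beta^2 \gg n^{-2}$, so the diffusive term $\pi^2/(2n^2)$ is negligible; and since $\beta \to 0$ we have $\beta^4 = o(\beta^2)$. Hence the $\beta^2/2$ term dominates and $\gamma = \frac{\beta^2}{2n}(1+o(1))$. In particular $\gamma \to 0$, so $\log(1/\lambda) = -\log(1-\gamma) = \gamma(1+o(1)) = \frac{\beta^2}{2n}(1+o(1))$, which gives
\[
\frac{1}{2\log(1/\lambda)} = \frac{n}{\beta^2}(1+o(1)) \,.
\]
Next, since $0 < n\beta \leq \log n$ throughout this regime, Lemma \ref{Lem:EFRB} applies verbatim and yields $\log\bigl(\gamma\,\Phi(h_0)^2/(2R)\bigr) \geq [1+o(1)]\log n$, where $R = \max_h |\Phi(\tilde h) - \Phi(h)|^2$. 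Substituting both facts into \eqref{Eq:WilsonLB} gives
\[
\tmix(\ep) \geq \frac{n}{\beta^2}(1+o(1))\Bigl([1+o(1)]\log n + \log[(1-\ep)/\ep]\Bigr) \,.
\]
Because $\log n \to \infty$ while $\log[(1-\ep)/\ep]$ is a fixed constant, the bracketed quantity equals $(1+o(1))\bigl(\log n + \log[(1-\ep)/\ep]\bigr)$, and the claimed inequality follows.

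I do not expect a serious obstacle: the argument is a direct transcription of the proof of Proposition \ref{Prop:LB1} with the spectral gap recomputed in the new window. The points to watch are (i) verifying that $\beta^2$ genuinely dominates the diffusive contribution $\pi^2/(2n^2)$ to $\gamma$, which is precisely where the hypothesis $n\beta\to\infty$ enters, and (ii) checking that the additive $O(1)$ term $\log[(1-\ep)/\ep]$ is absorbed into the $1+o(1)$ prefactor, which is valid because $\log n$ diverges. The upper hypothesis $n\beta \leq \log n$ is needed only to keep us inside the range where Lemma \ref{Lem:EFRB} is stated.
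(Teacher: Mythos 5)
Your proposal is correct and follows exactly the paper's route: the paper's proof of Proposition \ref{Prop:LB2} simply cites \eqref{Eq:SG}, \eqref{Eq:WilsonLB}, and Lemma \ref{Lem:EFRB}, which is precisely the combination you carry out, with the only new work being the observation that $n\beta \to \infty$ makes the $\beta^2/2$ term dominate the gap so that $\gamma = \frac{\beta^2}{2n}(1+o(1))$. Your filled-in details (including absorbing the constant $\log[(1-\ep)/\ep]$ into the $1+o(1)$ factor since $\log n \to \infty$) are accurate.
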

\begin{proof}
This again follows from \eqref{Eq:SG}, \eqref{Eq:WilsonLB} and Lemma \ref{Lem:EFRB}.
\end{proof}

\section{Upper Bounds}
\label{Sec:UB}

\subsection{Nearly unbiased}
\begin{prop} \label{Prop:NearUn}
There exists a constant $c_1$ such that
if $n \beta \leq 1$, then 
\[
\tmix(\ep) \leq c_1 n^3 \log n \,.
\]
\end{prop}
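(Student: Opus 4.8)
The plan is to run the standard argument for monotone chains, reducing to a coalescence estimate, and then to analyze the coalescence by the particle coupling of \ocite{A:RWG}. First, the $\beta$-biased exclusion process is monotone with respect to the partial order $h \preceq h'$ on height functions defined by $h(x) \le h'(x)$ for all $x$: realize the dynamics by the grand coupling that at each step chooses an internal vertex uniformly and a single $\mathrm{Bernoulli}(p)$ coin dictating whether to install a local minimum or a local maximum there, applied simultaneously to all starting configurations; this preserves $\preceq$. Every height function with $k$ up-steps lies between the maximal configuration $h_0$ of \eqref{Eq:h0def} (particles at $1,\dots,k$) and the minimal one $h_{\min}$ (particles at $n-k+1,\dots,n$), so if $\tau$ denotes the coalescence time of the copies started from $h_0$ and $h_{\min}$ under this grand coupling, then $\max_\sigma \|\P_\sigma(X_t\in\cdot) - \pi\|_{\mathrm{TV}} \le \P(\tau > t)$. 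By Markov's inequality and the submultiplicativity of $t \mapsto \max_{\sigma,\sigma'}\|\P_\sigma(X_t\in\cdot) - \P_{\sigma'}(X_t\in\cdot)\|_{\mathrm{TV}}$, it then suffices to prove $\E[\tau] \le C\,n^3\log n$ with $C$ uniform over $\beta\in[0,1/n]$.

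To bound $\E[\tau]$ I pass to the ordered-particle description. Let $\xi_1(t)<\dots<\xi_k(t)$ and $\xi_1'(t)<\dots<\xi_k'(t)$ be the ordered particle positions of the two copies; monotonicity of the height functions is equivalent to $\xi_i(t)\le\xi_i'(t)$ for all $i$, and $\tau$ is the first time all gaps $\delta_i(t) = \xi_i'(t)-\xi_i(t)$ vanish, with $\delta_i(0) = n-k$. Equivalently, the sites where the two configurations disagree carry $\Theta(n)$ ``discrepancy'' particles that perform nearest-neighbour walks and annihilate in pairs when they meet; this is the exclusion analogue of the coupled random walks that must hit zero in \ocite{A:RWG}.

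The key point is that the small bias does not overwhelm this diffusive picture. A discrepancy walk lives in an interval of length $O(n)$ and carries a per-move drift of magnitude $O(\beta) = O(1/n)$, so over the diffusive timescale of $\Theta(n^2)$ relevant moves the drift contributes only $O(n^2\beta) = O(n)$, of the same order as the diffusive fluctuation; concretely, the scale function $\bigl((1-\beta)/(1+\beta)\bigr)^{O(n)}$ stays bounded away from $0$ and $\infty$ uniformly in $\beta\le 1/n$. Hence, within $\Theta(n^2)$ of the moves affecting it, a discrepancy annihilates with probability bounded below by a positive constant, just as in the unbiased case. Since each vertex is selected at rate $\Theta(1/n)$ per unit time, $\Theta(n^2)$ such moves take $\Theta(n^3)$ units of time, and a union bound over the $\Theta(n)$ discrepancies (equivalently the $k\le n$ gaps $\delta_i$) shows that all of them have disappeared by time $O(n^3\log n)$ with probability at least $1/2$; iterating gives $\E[\tau] = O(n^3\log n)$, hence $\tmix(\ep)\le c_1(\ep)\,n^3\log n$.

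The main obstacle is verifying, uniformly in the configuration, that the discrepancies cannot stall — that each relevant vertex is refreshed at rate $\Omega(1/n)$ when a discrepancy sits there, and that its per-move drift is $O(\beta)$ no matter how the particles bunch. A robust way to handle this is to argue via the area $A_t = \sum_x \bigl(h_t(x) - h_t'(x)\bigr)$, a nonnegative supermartingale under the grand coupling whose quadratic variation and Lyapunov/optional-stopping behaviour yield the needed control on $\tau$; checking that the $O(\beta)$ drift genuinely survives this step is precisely where the hypothesis $n\beta\le 1$ enters. (At $\beta=0$ the argument degenerates to the known $O(n^3\log n)$ bound for symmetric exclusion and random adjacent transpositions.)
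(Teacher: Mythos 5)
Your reduction to a coalescence estimate is fine (the biased exclusion is indeed monotone under the single-coin grand coupling, every configuration is sandwiched between $h_0$ and $h_{\min}$, and a bound $\E[\tau]=O(n^3\log n)$ would give the proposition), but the coalescence estimate itself — the entire quantitative content of the statement — is asserted rather than proved, and the repair you propose does not work. Under the grand coupling the "discrepancies" are not annihilating nearest-neighbour walks in any usable sense: whether a given discrepancy moves when its site is selected depends on the local shape of \emph{both} copies (it can stall for long stretches), and new discrepancies are created (e.g.\ at a site where the two paths agree, the upper copy is a local minimum and the lower copy is monotone, the common coin flips the upper copy up with probability $q$). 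You acknowledge this obstacle, but the patch you offer is based on a false claim: the area $A_t=\sum_x\bigl(h_t(x)-h'_t(x)\bigr)$ is \emph{not} a supermartingale for $\beta>0$. A one-step computation with $f=h-h'\ge 0$ gives expected change
\[
\frac{1}{n-1}\Bigl[-\tfrac{f(1)+f(n-1)}{2}\;-\;\beta\bigl(E(h)-E(h')\bigr)\Bigr],
\]
where $E(\cdot)$ counts internal local extrema: the Laplacian part telescopes to the (nonpositive) boundary term, but the bias part has no sign. Taking the upper copy to be a path with $O(1)$ extrema agreeing with the lower copy near both endpoints, and the lower copy a zigzag with order $n$ extrema, the drift is $+\beta(E(h')-E(h))/(n-1)>0$. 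So the optional-stopping/Lyapunov step you lean on fails exactly in the regime $\beta\asymp 1/n$ you need, and "the hypothesis $n\beta\le 1$ enters here" is left as an unproved promise. To make an area argument work one would essentially have to weight by the second eigenfunction (i.e.\ redo the spectral computation of Section \ref{Sec:SpectralLB} as a contraction estimate), which is a different proof.

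The paper sidesteps all of this by not using the monotone coupling at all: it builds a \emph{labelled-particle} coupling (in the spirit of the Aldous coupling you cite) designed so that, for each label $i$, the distance $D_i(t)$ between the two copies of particle $i$ is a delayed nearest-neighbour walk with per-move bias at most $\beta$, move probability at least $1/(n-1)$, and absorption at $0$; it is then dominated by a single biased walk started from $n$, and a CLT estimate (drift $\beta n^3/(n-1)=O(n)$, comparable to the diffusive fluctuation, using $n\beta\le 1$) gives a constant hitting probability per block of $c_1n^3$ steps, after which blocks plus a union bound over the $k\le n$ labels yield $\tmix(\ep)\le 2c_1n^3\log_2 n$. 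That explicit coupling construction — guaranteeing each discrepancy is a single dominated walk that cannot be recreated once it hits $0$ — is precisely the ingredient missing from your sketch; with it, your diffusive heuristic becomes a proof, but without it (or a correct Lyapunov functional) the argument has a genuine gap.
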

\begin{proof}
We now define a Markov chain $(\sigma_t, \eta_t)$ so that
\begin{itemize}
\item $\sigma_t$ and $\eta_t$ are \emph{labelled} $k$-particle configurations,
\item if the labels are erased, $(\sigma_t)$ and $(\eta_t)$ each are
biased exclusion processes.
\end{itemize}

We say a labelled particle is \emph{coupled} at time $t$ if it occupies the same
vertex in both $\sigma_t$ and $\eta_t$.   

We now describe a move of this chain from state $(\sigma, \eta)$: 
Pick an edge $e$ among the $n-1$ edges uniformly at random.  We consider 
several cases.

\begin{itemize}
\item \emph{Both $\sigma$ and $\eta$ have no particles on $e$.}  The chain remains
at $(\sigma, \eta)$.
\item \emph{One of $\sigma, \eta$ contains two particles on $e$, and one of
$\sigma, \eta$ contains one particle on $e$.}   Suppose, without loss of
generality, that $\sigma$ contains one particle on $e$.   Toss a $p$-coin to
determine where the particle is placed in $\sigma$.   If the single particle on $e$ in $\sigma$
is coupled, or has the same label as one of the particles on $e$ in $\eta$, arrange 
the two particles on $e$ in $\eta$ to preserve or facilitate the coupling.
Otherwise, toss a fair coin to determine the placement of the two particles in
$\eta$.
\item \emph{Both $\sigma$ and $\eta$ have two particles on $e$}.  
Toss a fair coin to determine the placement of the two particles on $e$ in $\sigma$.
Place the particles in $\eta$ on $e$ to preserve or facilitate any couplings; if
no coupling is possible, toss a fair coin to determine the particle placement on $e$.
\end{itemize}

The distance $D_i(t)$ between particle $i$ in $\sigma$ and particle $i$ in 
$\eta$ performs a delayed nearest-neighbor walk, with possible bias $\beta$ at each move 
(sometimes the bias is to the right, sometimes to the left).  The probability it moves
is at least $1/(n-1)$.    We can thus couple it to a random walk  $(S_t)$ with constant
upward bias $\beta$ so that $D_i(t) \leq S_t$ until $D_i(t)$ hits zero.

Consider the biased random walk $(S_t)$ on $\Z$ with positive bias $\beta$,
holding probability $1-\frac{1}{n-1}$, and $S_0 = n$; if
\[
\tau = \min\{t \geq 0 \,:\, S_t = 0 \} \,, \quad \text{ and } 
\tau_i = \min\{t \geq 0 \,:\, D_i(t) = 0 \} \,,
\]
then 
\[
\P( \tau_i > u) \leq \P( \tau > u ) \,.
\]
We have
\begin{equation*}
\P(\tau \leq t) \geq \P_{n} (S_t \leq 0 ) \\
 = \P\left( Z_t \leq \frac{-n - t \beta/(n-1)}{\sqrt{4tpq/(n-1)}} \right)
\end{equation*}
where $Z_t = \frac{S_t - \E_n(S_t)}{\var(S_t)}$.  
By the Central Limit Theorem, since $\beta n \leq 1$, 
there is a constant $c_0 > 0$ such that, for $n$ large enough,
\[
\P_n(S_{n^3} \leq 0) \geq c_0 \,.
\]
Thus by taking $c_1$ large enough,
\[
\P_n(\tau > c_1 n^3) \leq (1-c_0)^{c_1} < \frac{1}{2} \,.
\]

%
%
If we run $2 \log_2 n$ blocks of $c_1 n^3$ moves, then we have
\begin{equation*}
  \P( \tau_i > 2c_1 n^3 \log_2 n) \leq \frac{1}{n^2} \,.
\end{equation*}
Setting $\tau_{{\rm couple}} \deq \min\{t \geq 0 \;:\; \sigma_t = \eta_t\}$, 
\[
\P\bigl( \tau_{{\rm couple}} > 2c_1 n^3\log_2 n \bigr) 
\leq \sum_{i=1}^k \P( \tau_i > 2c_1 n^3\log_2 n )< \frac{1}{n} \,.
\]
If $d(t) = \sup_h\| P^t(h,\cdot) - \pi\|_{{\rm TV}}$, then
 $d(2c_1n^3\log_2 n) \leq \frac{1}{n}$, and
\[
\tmix(\ep) \leq 2c_1 n^3\log_2 n
\]
for $n$ large enough.
\end{proof}

%
%
%

\subsection{Path coupling}

\begin{figure}[h]
  \begin{center}
    \includegraphics[scale=0.45]{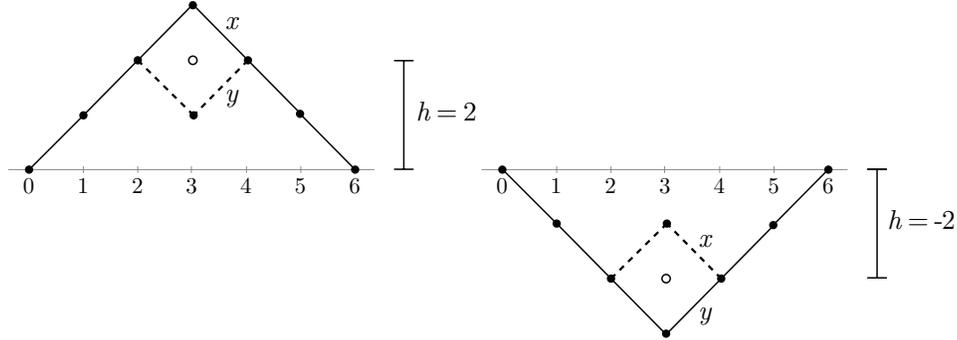}
    \caption{Neighboring configurations $x$ and
      $y$. \label{Fig:WalkNeighbors}}
  \end{center}
\end{figure}

We consider configurations $x$ and $y$ to be adjacent if $y$ can be
obtained from $x$ by taking a particle and moving it to an adjacent
unoccupied site.  In the path representation, moving a particle to the
right corresponds to changing a local maximum (i.e., an ``up-down'')
to a local minimum (i.e. a ``down-up'').  Moving a particle to the
left changes a local minimum to a local maximum.  See
Figure~\ref{Fig:ExclMov}, where $v = 3$.

\begin{thm} \label{thm:mixbe} Consider the biased exclusion process
  with bias $\beta = \beta_n = 2p_n -1 > 0$ on the segment of length
  $n$ and with $k$ particles.  Set $\alpha = \sqrt{p_n/(1-p_n)}$.
  For $\ep > 0$, if $n$ is large enough, then
  \[
  \tmix(\ep) \leq \frac{2n}{\beta^2} \left[ \log(1/\ep) + \log\left[
      \frac{\alpha(\alpha^k -1)(\alpha^{n-k} - 1)}{(\alpha -1 )^2} \right]
  \right] \,.
  \]
  In particular, if $\beta \leq {\rm const.} < 1$, then
  $\alpha = 1 + \beta + O(\beta^2)$, so
  \[
  \tmix(\ep) \leq \frac{2n}{\beta^2}\Bigl[ \log(\ep^{-1}) +
  n[\beta + O(\beta^2)] -2 \log \beta + O(\beta) \Bigr] \,.
  \]
\end{thm}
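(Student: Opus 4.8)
The plan is to prove Theorem~\ref{thm:mixbe} by path coupling in the height‑function representation, using an exponential metric (following \ocite{GPR}). Writing $h_\sigma$ for the path associated to a configuration $\sigma$, call two configurations adjacent when they differ by a single legal move, so that $h_\sigma$ and $h_\tau$ disagree at exactly one internal vertex $i$, and give that edge the length $\alpha^{\max(h_\sigma(i),h_\tau(i))}-\alpha^{\min(h_\sigma(i),h_\tau(i))}$; let $\rho$ be the induced path metric. Since the configurations form a distributive lattice under the pointwise order on paths, and this length function telescopes column by column along any monotone sequence of moves, the geodesic from $\sigma$ to $\tau$ passes through $\sigma\wedge\tau$ and $\rho$ has the closed form $\rho(\sigma,\tau)=\sum_{x=1}^{n-1}|\alpha^{h_\sigma(x)}-\alpha^{h_\tau(x)}|$. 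In particular $\rho$ is maximized between the extremal configurations $h_{\min}$ (all particles packed to the left) and $h_{\max}$ (all to the right), so that $\mathrm{diam}(\rho)=\sum_{x=1}^{n-1}\bigl(\alpha^{h_{\min}(x)}-\alpha^{h_{\max}(x)}\bigr)$; evaluating these geometric sums (using $k\le n-k$, the other case being symmetric) bounds this by $\alpha(\alpha^k-1)(\alpha^{n-k}-1)/(\alpha-1)^2$.

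For the one‑step contraction I would use the identity coupling: select the same edge, and the same refreshment coin, in both copies. Take adjacent $\sigma,\tau$ with $h_\sigma$ a local maximum and $h_\tau$ a local minimum at the disagreement vertex $i$, so $\rho(\sigma,\tau)=\alpha^{h_\sigma(i)}(1-\alpha^{-2})$. If vertex $i$ is selected, the $q$‑coin turns $i$ into a local maximum in both copies and the $p$‑coin into a local minimum in both, so the copies coalesce; if a vertex outside $\{i-1,i,i+1\}$ is selected, the copies agree on all relevant heights and $\rho$ is unchanged. The only moves that can increase $\rho$ are selecting $i-1$ or $i+1$, and a short case analysis there (splitting on the direction of the increment at the selected vertex) shows that the distance is multiplied either by $1+\alpha$ with probability $q/(n-1)$, or by $1+\alpha^{-1}$ with probability $p/(n-1)$. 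The crucial point is the identity $q\alpha=p\alpha^{-1}=\sqrt{pq}$, which makes \emph{each} of the two neighbouring vertices contribute exactly $\sqrt{pq}\,\rho(\sigma,\tau)/(n-1)$ to $\E[\rho(X_1,Y_1)]$. Hence
\[
\E[\rho(X_1,Y_1)]\le\Bigl(1-\frac{1-2\sqrt{pq}}{n-1}\Bigr)\rho(\sigma,\tau),
\]
and since $2\sqrt{pq}=\sqrt{1-\beta^2}$ and $1-\sqrt{1-\beta^2}\ge\beta^2/2$, the contraction rate is $\delta\ge\beta^2/\bigl(2(n-1)\bigr)$.

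The path coupling theorem (see, e.g., \fullocite{LPW}) then gives $\tmix(\ep)\le\delta^{-1}\bigl(\log\mathrm{diam}(\rho)+\log(1/\ep)\bigr)$, and inserting the bounds on $\delta$ and $\mathrm{diam}(\rho)$ yields the first displayed estimate. The concluding ``in particular'' is routine: for bounded $\beta$ one has $\log\alpha=\beta+O(\beta^2)$ and $\log(\alpha-1)=\log\beta+O(\beta)$, so $\log\bigl[\alpha(\alpha^k-1)(\alpha^{n-k}-1)/(\alpha-1)^2\bigr]\le(n+1)\log\alpha-2\log(\alpha-1)=n[\beta+O(\beta^2)]-2\log\beta+O(\beta)$. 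I expect the main obstacle to be the contraction bookkeeping at the neighbouring vertices $i\pm1$: one must enumerate the possible local shapes there and notice the exact cancellation $q\alpha=p\alpha^{-1}=\sqrt{pq}$ — this is precisely what allows the plain identity coupling to succeed (no ``greedy'' edge‑swapping is needed) and what forces the base of the metric to be $\alpha=\sqrt{p/q}$ rather than $p/q$.
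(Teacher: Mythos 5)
Your coupling and contraction analysis are essentially the paper's: the same identity coupling (same edge, same coin), the same case analysis at the two neighbouring vertices, and the same cancellation giving expected contraction by $1-\frac{1-2\sqrt{pq}}{n-1}$, with $1-2\sqrt{pq}\ge \beta^2/2$. The genuine gap is in the metric you feed into the path coupling theorem. You assign an adjacent pair differing by a diamond at midpoint height $h$ the length $\alpha^{h+1}-\alpha^{h-1}=\alpha^{h-1}(\alpha^2-1)$, and then invoke the conclusion $\tmix(\ep)\le \delta^{-1}\bigl(\log \mathrm{diam}(\rho)+\log(1/\ep)\bigr)$. That form of the theorem (e.g.\ Corollary 14.7 of \fullocite{LPW}) requires $\rho(x,y)\ge 1$ for all $x\ne y$, because the final step converts $\E[\rho(X_t,Y_t)]<\ep$ into $\P(X_t\ne Y_t)<\ep$. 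Your edge lengths violate this badly: the midpoint height can be as low as $-(n-k)+1$, so the minimal length is $\alpha^{-(n-k)}(\alpha^2-1)$, which for small $\beta$ (say $\beta\le \log n/n$, $k=n/2$) is far below $1$. With such a metric, a small expected distance does not control the probability of disagreement, so the displayed mixing-time bound does not follow from your argument as written; indeed your $\log\mathrm{diam}$ is of order $k\log\alpha$ rather than $n\log\alpha$, which is exactly the discrepancy absorbed by the missing normalization.

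The fix is the paper's choice of metric: rescale your lengths by $\alpha^{n-k}/(\alpha^2-1)$ (equivalently take $\ell(x,y)=\alpha^{n-k+h}$, which is $\ge 1$ since $h\ge -(n-k)$), or use the general path coupling bound with $\log\bigl(\mathrm{diam}(\rho)/\rho_{\min}\bigr)$ in place of $\log\mathrm{diam}(\rho)$. Either way the quantity entering the logarithm becomes comparable to $\alpha(\alpha^k-1)(\alpha^{n-k}-1)/(\alpha-1)^2$ (the paper computes this as the weighted number of diamond flips along a monotone geodesic between the two extremal paths), and the stated bound, including the ``in particular'' expansion, then follows exactly as you conclude. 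So the argument is salvageable with a one-line change, but the normalization of the exponential metric is not a cosmetic point: it is the step that makes the path coupling theorem applicable and it changes the value of the diameter term in precisely the regime $\beta n=O(\log n)$ that the theorem is meant to cover.
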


\begin{rmk}
  Note that whenever
  $c_1 (\log n)/n < \beta < c_2< 1$ for constants $c_1$ and $c_2$,  the ratio
  of the upper and lower bounds is bounded.  Thus there is a pre
  cut-off for this chain in this regime.
\end{rmk}

\begin{proof}


  For $\alpha = \sqrt{p/q} > 1$, define the distance between two configurations $x$
  and $y$ which differ by a single transition to be
  \begin{equation*}
    \ell(x, y) = \alpha^{n - k + h},
  \end{equation*}
  where $h$ is the height of the midpoint of the diamond that is
  removed or added. (See Figure \ref{Fig:WalkNeighbors}.)  Note that
  $\alpha > 1$ and $h \geq -(n-k)$ guarantee that $\ell(x, y) \geq 1$, so
  we can use path coupling -- see, e.g., Theorem 14.6 of \fullocite{LPW}.  We again let $\rho$
  denote the path metric on $\X$ corresponding to $\ell$.


  We couple from a pair of initial configurations $x$ and $y$ which
  differ at a single vertex $v$ as follows: choose the same vertex in
  both configurations, and propose a local maximum with probability
  $1-p$ and a local minimum with probability $p$.  For both $x$ and
  $y$, if the current vertex $v$ is a local extremum, refresh it with
  the proposed extremum; otherwise, remain at the current state.

  Let $(X_1,Y_1)$ be the state after one step of this coupling.  There
  are several cases to consider.

  The first case is shown in Figure \ref{Fig:WalkNeighbors}.  Let $x$
  be the upper configuration, and $y$ the lower.  Here the edge
  between $v-2$ and $v-1$ is ``up'', while the edge between $v+1$ and
  $v+2$ is ``down'', in both $x$ and $y$.  If $v$ is selected, the
  distance decreases by $\alpha^{n-k+h}$.  If either $v-1$ or $v+1$ is
  selected, and a local minimum is selected, then the lower
  configuration $y$ is changed, while the upper configuration $x$
  remains unchanged.  Thus the distance increases by $\alpha^{n-k+h-1}$
  in that case.  We conclude that
  \begin{align}
    \E_{x,y}[ \rho(X_1, Y_1) ] - \rho(x,y)
    & = -\frac{1}{n-1}\alpha^{h+n-k} + \frac{2}{n-1}p \alpha^{h+n-k -1}
      \nonumber \\
    & = \frac{\alpha^{h+n-k}}{n-1} \left( \frac{2p}{\alpha} - 1 \right) 
    = \frac{\alpha^{h+n-k}}{n-1}\left( 2\sqrt{pq} - 1 \right) \,.
      \label{Eq:AEPC1} 
  \end{align}
  In the case where $x$ and $y$ at $v-2, v-1, v, v+1, v+2$ are as in
  the right panel of Figure \ref{Fig:WalkNeighbors}, we obtain
  \begin{align}
    \E_{x,y}[ \rho(X_1, Y_1) ] - \rho(x,y)
    & = -\frac{1}{n-1}\alpha^{h+n-k} + \frac{2}{n-1}(1-p) \alpha^{h+n+1} 
      \nonumber \\
    & = \frac{\alpha^{h+n-k}}{n-1}\left( 2\alpha (1-p) - 1 \right) 
    = \frac{\alpha^{h+n-k}}{n-1}\left(2 \sqrt{pq} - 1 \right)
      \label{Eq:AEPC2}
  \end{align}
  (We create an additional disagreement at height $h+1$ if either $v-1$ or
  $v+1$ is selected and a local maximum is proposed; the top
  configuration can accept the proposal, while the bottom one rejects
  it.) 
  Since $p > 1/2$, we have $\delta \deq 1 - 2\sqrt{pq} > 0$,
  and both \eqref{Eq:AEPC1} and  \eqref{Eq:AEPC2} reduce to
  \begin{equation} \label{Eq:AEPC3} \E_{x,y}[ \rho(X_1, Y_1) ] -
    \rho(x,y) = - \frac{\alpha^{h+n-k}}{n-1} \delta\,.
  \end{equation}
  Now consider the case on the left of Figure \ref{Fig:MoreConfigs}.
  We have
  \begin{align*}
    \E_{x,y}[ \rho(X_1, Y_1) ] - \rho(x,y)
    & = -\frac{1}{n-1}\alpha^{h+n-k} 
      + \frac{1}{n-1}q \alpha^{h+n-k+1} 
      + \frac{1}{n-1}p \alpha^{h+n-k-1} 
      \nonumber \\
    & = \frac{\alpha^{h+n-k}}{n-1}\left(
      q\alpha + \frac{p}{\alpha} - 1 \right)
      \nonumber \\
    & = -\frac{\alpha^{h+n-k}}{n-1} \delta \,,
  \end{align*}
  which gives again the same expected decrease as \eqref{Eq:AEPC3}.
  (In this case, a local max proposed at $v-1$ will be accepted only
  by the top configuration, and a local min proposed at $v+1$ will be
  accepted only by the bottom configuration.)  The case on the right
  of Figure \ref{Fig:MoreConfigs} is the same.

  Thus, \eqref{Eq:AEPC3} holds in all cases.  That is, since
  $\rho(x,y) = \ell(x,y) = \alpha^{h+n-k}$,
  \begin{equation*}
    \E_{x,y}[\rho(X_1, Y_1)]
    = \rho(x,y)\left(1 - \frac{\delta}{n-1} \right)
    \leq \rho(x,y) e^{-\frac{\delta}{n-1}} \,.
  \end{equation*}

  The diameter of the state-space is the distance from the configuration with $k$ ``up''
  edges followed by $n-k$ ``down'' edges to the configuration with $n-k$
  ``down edges'' followed by $k$ ``up'' edges.
  To move from the former to the latter, first flip the top-most maxima,
  next the subsequent two maxima, continuing down $k-1$ levels.
  At level $j$, there are $j$ maxima to flip.  Each of the next 
  $n-2k+1$ levels will have $k$ maxima to flip.  The number of
  maxima in the last $k-1$ levels decrease by a unit at each depth.
  Thus, the distance travelled equals
  \begin{align*}
  \sum_{j=1}^{k-1} j \alpha^{n-k+k-j} + \sum_{j=k}^{n-k} k \alpha^{n-k+k-j}
  + \sum_{j=n-k+1}^{n-1}(n-j) \alpha^{n-k+k - j}
  \\
  = \frac{\alpha(\alpha^k - 1)(\alpha^{n-k}-1)}{(\alpha-1)^2}
  \end{align*}
%
%
 Since $\delta \geq \beta^2/2$, Corollary 14.7 of \fullocite{LPW} gives 
  \[
  \tmix(\ep) \leq \frac{2n}{\beta^2} \left[ \log(1/\ep) + \log\left[
      \frac{\alpha (\alpha^k -1)(\alpha^{n-k}-1)}{(\alpha-1)^2} \right]
  \right] \,.
  \]
  Note that $\alpha = 1 + \beta + O(\beta^2)$ as $\beta \to 0$, so
  \[
  \tmix(\ep) \leq \frac{2n}{\beta^2}\left[ \log(\ep^{-1}) +
    n[\beta + O(\beta^2)] -2 \log \beta + O(\beta) \right] \,.
  \]
  In particular, if $\beta = \frac{1}{n}$, then
  $\tmix(\ep) = O(n^3 \log n)$, which is the same order as the 
  mixing time in the symmetric case.

%

  \begin{figure}
    \centerline{\includegraphics[scale=0.35]{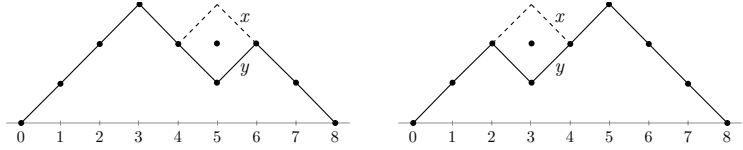}}
    \caption{More neighboring configurations. \label{Fig:MoreConfigs}}
  \end{figure}


\end{proof}

\section{Lower bound via a single particle}
\label{Sec:LBU}

\begin{prop} \label{Prop:LBU}
 Suppose that $n\beta \to \infty$.  For any $\ep > 0$ and
  $\delta > 0$, if $n$ is large enough, then
  \[
  \tmix(\ep) \geq \frac{(1-\delta)n^2}{2 \beta} \,.
  \]
 \end{prop}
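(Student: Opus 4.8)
The plan is to track the position of the left-most particle (equivalently, the first time the path leaves its initial minimum height) and show it cannot reach stationarity in fewer than $(1-\delta)n^2/(2\beta)$ steps. The intuition is that under $\pi$, with positive bias $\beta$ and $n\beta \to \infty$, particles are pushed to the right, so the left-most particle sits at position $\asymp 1/\beta$ from the left end with high probability; starting from the packed configuration with all $k$ particles on the left (sites $1,\dots,k$), the left-most particle starts at site $1$, and it must make of order $1/\beta$ net rightward steps. But its motion is a delayed random walk: the left-most particle's edge is selected with probability $1/(n-1)$ per step, and each time it is selected it takes a biased $\pm 1$ step (bias $\beta$) — except it may also be "pushed back" by the particle to its right. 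So after $t$ steps the displacement is a sum of roughly $t/(n-1)$ increments each with mean $\beta$ and variance $O(1)$; to accumulate mean displacement of order $1/\beta$ one needs $t/(n-1) \gtrsim 1/\beta^2$, i.e.\ $t \gtrsim n/\beta^2$ — wait, that gives the wrong order. The correct bookkeeping: we need displacement of order $n$ times a constant? No — let me reconsider. Actually the left-most particle must travel to where the bulk left-edge sits, and more carefully one should compare the whole empirical profile; the cleanest route is: let $A$ be the event that the left-most particle is at position $> n/2$ (say), or more robustly use the number of particles in $\{1,\dots,m\}$ for suitable $m$. Under $\pi$ (a product-like biased measure) the leftmost particle is at $O(1/\beta)$, and to be safe we pick a test function whose stationary value forces displacement of order $n$: e.g. count particles in the left half. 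Starting packed, all of $\{1,\dots,k\}$-particles means $\min(k, n/2)$ particles are in the left half; under $\pi$ essentially none are (since $n\beta \to \infty$ pushes everything right, with exponential tails at scale $1/\beta$). So the left half must be "emptied", which requires order $n/2$ particles to cross the midpoint, and the relevant bottleneck is again the flux.

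The key steps, in order: (1) Set up the coupling/domination: follow the trajectory of a single tagged particle (the left-most, or equivalently introduce a second-class particle argument) and show its position $L_t$ is stochastically dominated by a delayed biased random walk $W_t$ with step probability $1/(n-1)$ and rightward bias $\beta$ — because the presence of other particles can only block rightward moves, never help. (2) Compute $\E[W_t] = t\beta/(n-1)$ and $\var(W_t) = O(t/(n-1))$, and use Chebyshev (or a Hoeffding-type bound since increments are bounded): for $t = (1-\delta)n^2/(2\beta)$ one gets $\E[W_t] \le (1-\delta)n/2$ and standard deviation $O(\sqrt{n/\beta}) = o(n)$ since $\beta \gg 1/n$ is not assumed — hmm, actually $\beta$ could be as large as a constant, in which case $\sqrt{n/\beta} = O(\sqrt n) = o(n)$ still holds. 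So $\P(W_t > n/2) \to 0$, hence $\P(L_t > n/2) \to 0$. (3) Compute the stationary probability: show $\pi(L > n/2) \to 1$ when $n\beta \to \infty$ — this follows from the explicit product-form-ish description of $\pi$ (the biased exclusion stationary measure is a conditioned product of Bernoullis with odds ratio $\alpha^2 = p/q$), or directly from a monotonicity/censoring argument that the left-most particle has exponential tail at scale $1/\beta$, so $\pi(L \le n/2) \le C e^{-c\beta n} \to 0$. (4) Conclude: if $t_{\rm mix}(\ep) \le t = (1-\delta)n^2/(2\beta)$, then $\| \P_{\rm packed}(X_t \in \cdot) - \pi\|_{\rm TV} < \ep$, but the event $\{L > n/2\}$ has probability $\to 0$ under the chain at time $t$ and probability $\to 1$ under $\pi$, giving total variation distance $\to 1 > \ep$, a contradiction for $n$ large.

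The main obstacle I anticipate is step (1): rigorously justifying that the left-most particle's position is dominated by a free biased walk, because "the left-most particle" is not a fixed tagged particle — when a particle to its left... wait, there is nothing to its left, so actually the left-most particle can only be overtaken if another particle jumps over it, which exclusion forbids, so it genuinely remains the left-most particle for all time, and its right-moves require its right-neighbor site to be vacant while its left-moves are always available (it's never blocked on the left since it's leftmost and sites $0$-side doesn't exist — well, it can't move left past site $1$). So the domination should actually go the other way for an upper bound on $L_t$: the left-most particle moves right \emph{at most} as fast as a biased walk that is never blocked, i.e.\ $L_t \preceq W_t$ where $W_t$ is the free walk reflected at (or started well above) $1$. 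This is a standard censoring/monotone-coupling statement but writing it carefully — defining the coupling so that every rightward move of $L$ is matched by a rightward move of $W$ — takes some care with the "do nothing" moves and the holding probability. A secondary obstacle is making step (3) quantitative enough; if an explicit formula for $\pi$ (leftmost particle law) is not readily citable, one can instead run the stationary chain and use reversibility, but the explicit stationary measure of biased exclusion is classical, so I would just invoke it. I expect the whole argument to be short once the domination is set up.
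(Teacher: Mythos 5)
Your overall strategy coincides with the paper's: dominate the leftmost particle $L_t$ by a delayed free walk with holding probability $1-\tfrac{1}{n-1}$ and bias $\beta$, show via Chebyshev that it cannot advance a linear distance by time of order $n^2/\beta$, and compare with the stationary law of the leftmost particle read off from the explicit weights $\pi(x)\propto (p/q)^{\sum_i z_i(x)}$. The boundary worry in your step (1) is resolved in the paper exactly along the lines you mention in passing: start the chain from a configuration with $L(x_0)=bn/2$, couple with a walk $S_t$ on $\Z$ started at $bn/2$ so that $L_t\le S_t$ while $S_t>1$, and use gambler's ruin to bound by $e^{-b\beta n}$ the chance that $S$ ever reaches $1$; no reflection argument is needed.

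There is, however, a genuine gap in your step (3) as stated. In the canonical case $k=n/2$ (and whenever $k/n\to 1/2$), the event $\{L>n/2\}$ is precisely the event that all particles are packed against the right end, and its stationary probability is at most $(1-q/p)\big/\bigl(1-(q/p)^{k(n-k)+1}\bigr)\approx 2\beta$, since every deficit $d\ge 0$ of $\sum_i z_i$ below its maximum is realized by at least one configuration. So when $\beta\to 0$ (which is allowed under $n\beta\to\infty$), your distinguishing event has $\pi$-probability tending to $0$, not to $1$, and your claimed bound $\pi(L\le n/2)\le Ce^{-c\beta n}$ is false; the total variation lower bound then yields nothing. (Your earlier intuition that the leftmost particle sits at distance $\asymp 1/\beta$ from the left end under $\pi$ is also off --- it sits near $n-k$ --- but you corrected that.) The fix is the one the paper implements: take the threshold a linear distance inside, $G=\{x: L(x)\le(1/2-b)n\}$. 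Since the rightmost empty site always satisfies $R(x)\ge n-k\ge n/2$, the injection sending the leftmost particle to the rightmost hole gives $\pi(\X_{j,\ell})\le \alpha^{-2(\ell-j)}$ and hence $\pi(G)\le n^2\alpha^{-bn}$, which vanishes in the regime where the proposition is applied; correspondingly one runs the Chebyshev step to time $t_n=(1-4b)n(n-1)/(2\beta)$ and chooses $b$ small relative to $\delta$. With that adjustment (and the $L(x_0)=bn/2$ starting point), your argument becomes the paper's proof.
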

 \begin{proof}
   We use the particle description here.  The
  stationary distribution is given by
  \begin{equation*}
    \pi(x) = \frac{1}{Z} \prod_{i=1}^k \left( \frac{p}{q}
    \right)^{z_i(x)}
    = \frac{1}{Z} (p/q)^{\sum_{i=1}^k z_i(x)},
  \end{equation*}
  where $(z_1(x), \ldots, z_k(x))$ are the locations of the $k$
  particles in the configuration $x$, and $Z$ is a normalizing
  constant. To see this, if $x'$ is obtained from $x$ by moving a
  particle from $j$ to $j+1$, then
  \[
  \frac{\pi(x)P(x,x')}{\pi(x')P(x',x)} = \frac{1}{(p/q)}
  \frac{\frac{1}{n-1}p}{\frac{1}{n-1}q} = 1 \,.
  \]

  Let $L(x)$ be the location of the left-most particle of the
  configuration $x$, and let $R(x)$ be the location of the right-most
  unoccupied site of the configuration $x$.

  Let
  \[
  \X_{j,\ell} = \{ x \st L(x) =j, \; R(x) = \ell \} \,,
  \]
  and consider the transformation $T: \X_{j,\ell} \to \X$ which takes
  the particle at $j$ and moves it to $\ell$.  Note that $T$ is
  one-to-one on $\X_{j,\ell}$.

  We have
  \begin{equation*}
    \pi(\X_{j,\ell})\left( \frac{p}{q} \right)^{\ell - j}
    \leq \sum_{x \in \X_{j,\ell}} \pi(T(x))
    \leq 1 \,,
  \end{equation*}
  so
  \[
  \pi(\X_{j,\ell}) \leq \alpha^{-2(\ell-j)} \,.
  \]
  Letting $G = \{x \st L(x) \leq (1/2-b)n\}$, we have
  \[
   \pi(G) \leq \sum_{{j \leq (1/2-b)n,\; \ell \geq n/2}}\pi(\X_{j,\ell})
  \leq n^2 \alpha^{-bn} \,.
  \]

  We consider now starting from a configuration $x_0$ with
  $L(x_0) = bn/2$.


  The trajectory of the left-most particle, $(L_t)$,
  can be coupled with a delayed biased nearest-neighbor walk $(S_t)$
  on $\Z$, with $S_0 = bn/2$ and 
  such that $L_t \leq S_t$, as long as $S_t > 1$.  The holding
  probability for $(S_t)$ equals $1 - \frac{1}{n-1}$.  By 
  the gambler's ruin, the chance $S_t$ ever 
  reaches $1$ is bounded above by
  \[
  (q/p)^{bn/2} \leq e^{- \beta b n} \,.
  \]
  Therefore.
  \begin{equation} \label{Eq:LSComp} \P_{x_0}\{L_t > (1/2-b)n\} \leq
    e^{-\beta b n} + \P_{bn/2}\{ S_t > (1/2-b)n\} \,.
  \end{equation}

%

  By Chebyshev's Inequality (recalling $S_0 = bn/2$),
  \[
  \P\{|S_t - bn/2 - \beta t/(n-1)| > M\} \leq \frac{\var(S_t)}{M^2}
  \leq \frac{t}{M^2(n-1)} \,.
  \]
  Taking $t_n = \frac{(1-4b)(n-1)n/2}{\beta}$ and $M = bn/2$ shows that
  \[
  \P_{bn/2}\{S_{t_n} > (1/2-b)n \} \leq \frac{4(1-4b)}{ b^2 \beta n} \to 0\,,
  \]
  as long as $\beta n \to \infty$.  Combining with \eqref{Eq:LSComp}
  shows that
  \[
  \P\{L_{t_n} > (1/2-b)n\} \leq e^{-b \beta n} + o(1)
  \,.
  \]

%
%
%
%
%
%
  We conclude that as long as $\beta n \to \infty$,
  \[
  d(t_n) \geq \P_{x_0}\{X_{t_n} \in G\} - \pi(G) \geq 1 - o(1)
  \]
  as $n \to \infty$, whence
  $\tmix(\ep) \geq \frac{(1-4b)(n-1)n}{2 \beta}$ for sufficiently large $n$.

\end{proof}

\section*{Acknowledgements}

We thank Perla Sousi and Nayantara Bhatnagar  for helpful comments on an earlier version of
this paper.

\begin{bibdiv}
  \begin{biblist}
    
\bib{A:RWG}{article}{
   author={Aldous, David},
  title =	 {Random walks on finite groups and rapidly mixing
                  Markov chains},
  conference =	 { title={Seminar on probability, XVII}, },
  book =	 { series={Lecture Notes in Math.}, volume={986},
                  publisher={Springer}, place={Berlin}, },
  date =	 {1983},
  pages =	 {243--297},
}

\bib{BBHM}{article}{
  author={Benjamini, Itai},
  author={Berger, Noam},
  author={Hoffman, Christopher},
  author={Mossel, Elchanan},
  title={Mixing times of the biased card shuffling and the asymmetric exclusion process},
  journal={Trans. Amer. Math. Soc.},
  volume={357},
  date={2005},
  number={8},
  pages={3013--3029 (electronic)},
}

\bib{GPR}{article}{
  author={Greenberg, S.},
  author={Pascoe, A.},
  author={Randall, D.},
  title={Sampling biased lattice configurations using exponential metrics},
  conference={ title = {ACM-SIAM Symposium on Discrete Algorithms}, address = {New York, New York}, year = {2009}, },
  year={2009},
  pages={76-85},
}
\bib{Lacoin}{article}{
   author={Lacoin, Hubert},
   title={Mixing time and cutoff for the adjacent transposition shuffle and
   the simple exclusion},
   journal={Ann. Probab.},
   volume={44},
   date={2016},
   number={2},
   pages={1426--1487},
}

    \bib{LPW}{book}{
   author={Levin, David A.},
   author={Peres, Yuval},
   author={Wilmer, Elizabeth L.},
   title={Markov chains and mixing times},
   note={With a chapter by James G. Propp and David B. Wilson},
   publisher={American Mathematical Society, Providence, RI},
   date={2009},
   pages={xviii+371},
}

\bib{W:MTS}{article}{
   author={Wilson, David Bruce},
   title={Mixing times of Lozenge tiling and card shuffling Markov chains},
   journal={Ann. Appl. Probab.},
   volume={14},
   date={2004},
   number={1},
   pages={274--325},
}

  \end{biblist}
\end{bibdiv}

\end{document}